\newcommand{\K}{\mathcal K}
\newcommand{\M}{\mathscr M}
\newcommand{\lp}{\lambda_\M^\perp}
\newcommand{\lc}{\lambda_\M^c}
\newcommand{\uac}{\gg_u}
\newcommand{\uorth}{\perp_u}
\newcommand{\Av}[1]{\mathbf A(#1)}
\newcommand{\AM}{\mathbf A(\M)}
\newcommand{\clt}[1]{\overline{#1}^{\ *}}
\newcommand{\cl}[2]{\overline{#1}^{#2}}
\newcommand{\C}{\mathcal C}
\newcounter{fig}
\newcommand{\myself}{\author{Gianluca Cassese}
                     \address{Universit\`{a} Milano Bicocca and University of Lugano}
                     \email{gianluca.cassese@unimib.it}
                     \curraddr{Department of Economics, Statistics and Management, 
                                  Building U7, Room 2097, via Bicocca 
                                  degli Arcimboldi 8, 20126 Milano - Italy}}
\newtheorem{theorem}{Theorem}
\theoremstyle{plain}
\newtheorem{corollary}{Corollary}
\newtheorem{lemma}{Lemma}
\newtheorem{proposition}{Proposition}
\newcommand{\Sim}{\mathscr{S}} 
\DeclareMathOperator*{\LIM}{LIM} 
\DeclareMathOperator*{\co}{co}
\newcommand{\cls}[2]{\overline{#1}^{#2}}
\newcommand{\cco}[1]{\overline\co^{#1}}
\newcommand{\Pba}{\mathbb{P}_{ba}} 
\newcommand{\B}{\mathfrak{B}} 
\newcommand{\A}{\mathscr{A}} 
\newcommand{\R}{\mathbb{R}} 
\newcommand{\N}{\mathbb{N}}
\newcommand{\abs}[1]{\vert #1\vert} 
\newcommand{\dabs}[1]{\left\vert #1\right\vert} 
\newcommand{\net}[3]{\left\langle #1_{#2}\right\rangle_{#2\in #3} } 
\newcommand{\neta}[1]{\net{#1}{\alpha}{\mathfrak A}} 
\newcommand{\nnet}[3]{\left\langle #1\right\rangle_{#2\in #3} } 
\newcommand{\seq}[2]{\net{#1}{#2}{\mathbb{N}}} 
\newcommand{\sseq}[2]{\nnet{#1}{#2}{\mathbb{N}}} 
\newcommand{\seqn}[1]{\seq{#1}{n}}
\newcommand{\norm}[1]{\Vert #1\Vert} 
\newcommand{\cond}[3]{#1(#2\vert #3)}
\newcommand{\set}[1]{\mathbf{1}_{#1}}
\newcommand{\sset}[1]{\mathbf{1}_{\{#1\}}}
\newcommand{\emp}{\varnothing}
\newcommand{\iref}[1]{(\textit{\ref{#1}})}
\newcommand{\imply}[2]{\iref{#1}$\Rightarrow$\iref{#2}}
\begin{document}

\title[Lebesgue decomposition]{Some Implications of Lebesgue Decomposition}
\myself
\date
\today
\subjclass[2000]{Primary 28A33, Secondary 46E27.} 

\keywords{Lebesgue decomposition, Riesz representation, Weak compactness,
Halmos Savage Theorem, Komlos Lemma.}

\maketitle

\begin{abstract}
Based on a generalization of Lebesgue decomposition we obtain a characterization
of weak compactness in the space $ba(\A)$, a representation of its dual space and
some results on the structure of finitely additive measures.
\end{abstract}

\section{Introduction and Notation}
Throughout the paper $\Omega$ will be an arbitrary set, $\A$ an algebra of its subsets, 
$\lambda$ a bounded, finitely additive set function on $\A$ (i.e. $\lambda \in ba(\A)$)
and $\M\subset ba(\A)$. 

Among the well known facts of measure theory is the Lebesgue decomposition: each 
$\mu\in ba(\A)$ admits a unique way of writing 
$\lambda=\lambda_\mu^c+\lambda_\mu^\perp$ where $\lambda_\mu^c\ll\mu$ and 
$\lambda_\mu^\perp\perp\mu$. In section \ref{sec lebesgue} we prove a slight 
generalization of this classical result and use it to obtain implications on the 
properties of relatively weakly compact subsets of the space of $ba(\A)$, section 
\ref{sec compact}, and on the representation of the corresponding dual space, section 
\ref{sec riesz} and to explore some implications, section \ref{sec implications}. Eventually, 
in section \ref{sec halmos savage} we exploit Lebesgue decomposition to investigate 
some properties of dominated families of finitely additive measures.

The main, simple idea is to treat the orthogonality condition implicit 
in Lebesgue decomposition as a separating condition for subsets of $ba(\A)$ and 
to investigate its implications in the presence of some form of compactness. A
classical result associates relative weak compactness with uniform absolute
continuity. In Theorems \ref{th compact} and \ref{th compact 2}, we obtain new 
necessary and sufficient conditions for relative weak compactness of subsets of $ba(\A)$
all of which stating that a corresponding measure theoretic property has to
hold uniformly. Following from these, we then obtain, Theorem \ref{th riesz}, a 
complete characterization of the dual space of $ba(\A)$ in terms of bounded Cauchy 
nets. The Riesz representation we propose is unfortunately not as handy as that
emerging from the Riesz-Nagy Theorem for Lebesgue spaces. Nevertheless it is
helpful in some problems as those treated in Corollary \ref{cor sep}. We also
exploit it to establish a partial analogue of the Koml\'os Lemma under finite
additivity.

Likewise, the absolute continuity property implicit in Lebesgue decomposition is
exploited in section \ref{sec halmos savage} to investigate some properties of 
dominated sets of measures. We obtain the finitely additive versions of two 
classical results, due to Halmos and Savage and to Yan, respectively. Somehow 
surprisingly, these two Theorems, whose original proofs use countable additivity 
in an extensive way, carry through unchanged to finite additivity. It is also shown, 
see Theorem \ref{th hs}, that dominated families of set functions have an implicit, 
desirable property which allows to replace arbitrary families of measurable sets 
with countable subfamilies.

For the theory of finitely additive measures and integrals we mainly follow 
the notation and terminology introduced by Dunford and Schwarz \cite{bible},
although we prefer the symbol $\abs\lambda$ to denote the total variation
measure generated by $\lambda$. $\Sim(\A)$ and $\B(\A)$ designate the 
families of $\A$ simple functions, endowed with the supremum norm, and its 
closure, respectively. If $f\in L^1(\lambda)$ we denote its integral interchangeably 
as $\int fd\lambda$ or $\lambda(f)$ although, when regarded as a set function, 
we will always use the symbol $\lambda_f\in ba(\A)$. We prefer, however, 
$\lambda_B$ to $\lambda_{\set B}$ when $B\in\A$. 

We define the following families: $ba(\A,\lambda)=\{\mu\in ba(\A):\mu\ll\lambda\}$,
$ba_1(\A,\lambda)=\{\lambda_f:f\in L^1(\lambda)\}$ and $ba_\infty(\A,\lambda)=%
\{\mu\in ba(\A):\abs\mu\le c\abs\lambda\text{ for some }c>0\}$
while $\Pba(\A)$ will denote the collection of finitely additive probabilities.

The closure of $\M$ in the strong, weak and weak$^*$ topology of $ba(\A)$
is denoted by $\overline\M$, $\cl\M w$ and $\cl\M*$, respectively. We refer to $\M$ the properties 
holding for each of its elements and use the corresponding symbols accordingly. 
Thus, we write $\lambda\gg\M$ (resp. $\lambda\perp\M$) whenever 
$\lambda\gg\mu$ (resp. $\lambda\perp\mu$) for every $\mu\in\M$. $\lambda\gg\M$ 
is sometimes referred to by saying that $\M$ is dominated by $\lambda$.

\section{Lebesgue Decomposition}
\label{sec lebesgue}
Associated with $\M$ is the collection
\begin{equation}
\label{AM}
\AM=\left\{\sum_n\alpha_n\frac{\abs{\mu_n}}{1\vee\norm{\mu_n}}:\ 
\mu_n\in\M,\ \alpha_n\ge0\text{ for }n=1,2,\ldots,\sum_n\alpha_n=1\right\}
\end{equation}
as well as the set function
\begin{equation}
\label{psi}
\Psi_\M(A)=\sup_{\mu\in\M}\abs{\mu}(A)\qquad A\in\A
\end{equation}

It is at times convenient to investigate the properties of $\AM$ rather than $\M$ 
and we note to this end that $\lambda\gg\M$ (resp. $\lambda\perp\M$) is equivalent 
to $\lambda\gg\AM$ (resp. $\lambda\perp\AM$). We say that $\M$ is uniformly 
absolutely continuous (resp. uniformly orthogonal) with respect to $\lambda$, in 
symbols $\lambda\uac\M$ (resp. $\M\uorth\lambda$) whenever 
$\lim_{\abs\lambda(A)\to0}\Psi_\M(A)=0$ (resp. when for each $\varepsilon$ 
there exists $A\in\A$ such that $\Psi_\M(A)+\abs\lambda(A^c)<\varepsilon$). 
One easily verifies that either of the these uniform properties extends from $\AM$ 
to $\M$ if and only if $\M$ is norm bounded.

\begin{lemma}
\label{lemma lebesgue}
There exists a unique way of writing
\begin{equation}
\label{lebesgue}
\lambda=\lc+\lp
\end{equation}
where $\lc,\lp\in ba(\A)$ are such that (i) $m\gg\lc$ for some $m\in\Av\M$ and (ii) 
$\lp\perp\M$. If $\lambda$ is positive or countably additive then so are $\lp,\lc$.
\end{lemma}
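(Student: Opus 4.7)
The plan is to select $m^* \in \AM$ maximizing $\norm{\lambda^c_m}$ among all $m \in \AM$, and then set $\lc = \lambda^c_{m^*}$ and $\lp = \lambda^\perp_{m^*}$, where $\lambda = \lambda^c_m + \lambda^\perp_m$ denotes the classical Lebesgue decomposition with respect to a single measure. The background fact I rely on throughout is that $ba(\A)$ is a Dedekind complete Riesz space in which $\{\nu\in ba(\A):\nu\ll m\}$ is a band and the classical Lebesgue decomposition coincides with the associated band decomposition, so in particular $\abs\lambda=\abs{\lambda^c_m}+\abs{\lambda^\perp_m}$ with the summands lattice-disjoint.

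For existence, set $s=\sup\{\norm{\lambda^c_m}:m\in\AM\}\le\norm\lambda$, pick a maximizing sequence $\seqn m\subset\AM$, and let $m^*=\sum_n 2^{-n}m_n$. The expression defining $\AM$ is closed under countable convex combinations, so $m^*\in\AM$. Since $m_n\ll m^*$, each $\lambda^c_{m_n}$ lies in the band generated by $m^*$ and is dominated by $\abs\lambda$, so by band projection $\abs{\lambda^c_{m_n}}\le\abs{\lambda^c_{m^*}}$; taking masses, $s\le\norm{\lambda^c_{m^*}}$, hence the supremum is attained.

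The decisive step is showing $\lambda^\perp_{m^*}\perp\M$. Suppose not: for some $\mu\in\M$, the classical Lebesgue decomposition of $\lambda^\perp_{m^*}$ with respect to $\abs\mu$ has a nonzero absolutely continuous part $\eta$. Let $m^{**}=(m^*+\abs\mu/(1\vee\norm\mu))/2\in\AM$. Then $\lambda^c_{m^*}$ and $\eta$ both lie in the band of $m^{**}$, are lattice-disjoint (as $\abs\eta\le\abs{\lambda^\perp_{m^*}}$ and $\abs{\lambda^c_{m^*}}\wedge\abs{\lambda^\perp_{m^*}}=0$), and each is dominated by $\abs\lambda$; therefore $\abs{\lambda^c_{m^*}}+\abs\eta\le\abs\lambda$ and lies in that band, giving $\abs{\lambda^c_{m^{**}}}\ge\abs{\lambda^c_{m^*}}+\abs\eta$ and thus $\norm{\lambda^c_{m^{**}}}>s$, a contradiction.

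Uniqueness reduces to the classical case: if $\lambda=\nu_i+\eta_i$ with $\nu_i\ll m_i\in\AM$ and $\eta_i\perp\M$ for $i=1,2$, form $m=(m_1+m_2)/2\in\AM$; then $\nu_i\ll m$, and $\eta_i\perp\M$ implies $\eta_i\perp m$ by a direct $\varepsilon$-argument that splits the prescribed tolerance across finitely many tail indices of the series defining $m$. Both $(\nu_i,\eta_i)$ are thus Lebesgue decompositions of $\lambda$ with respect to $m$ and therefore coincide. Positivity and countable additivity of $\lc$ and $\lp$ follow from the same properties in the classical Lebesgue decomposition with respect to $m^*$. The main obstacle is the attainment of the supremum and the band-theoretic additivity yielding the strict inequality $\norm{\lambda^c_{m^{**}}}>s$; both rely on the identification of Lebesgue decomposition with band projection in $ba(\A)$.
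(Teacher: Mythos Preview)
Your argument is correct and complete. It differs in strategy from the paper's proof, however.

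The paper establishes once and for all that the set
\[
\mathcal L(\M)=\{\nu\in ba(\A):\nu\ll m\text{ for some }m\in\AM\}
\]
is a band in $ba(\A)$: given an increasing net $\neta\nu$ in $\mathcal L(\M)$ with limit $\nu$, one extracts a rapidly convergent subsequence, picks $m_n\in\AM$ dominating $\nu_{\alpha_n}$, and shows $m=\sum_n2^{-n}m_n\in\AM$ dominates $\nu$. The decomposition \eqref{lebesgue} is then simply the Riesz band decomposition of $\lambda$ relative to $\mathcal L(\M)$, which also immediately gives the Corollary $\mathcal L(\M)=(\M^\perp)^\perp$.

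Your route is instead a maximization argument tailored to the fixed $\lambda$: you maximize $\norm{\lambda^c_m}$ over $m\in\AM$, attain the supremum at $m^*=\sum_n2^{-n}m_n$ (using, as the paper does, that $\AM$ is stable under countable convex combinations), and then argue by contradiction that the residual $\lambda^\perp_{m^*}$ is orthogonal to every $\mu\in\M$. Both proofs ultimately rest on the same two ingredients, namely the countable-combination stability of $\AM$ and the identification of Lebesgue decomposition with band projection, but the paper's version is structural and yields a statement about $\mathcal L(\M)$ independent of $\lambda$, while yours is closer in spirit to classical exhaustion proofs of Lebesgue decomposition. One small remark: the ``direct $\varepsilon$-argument'' you invoke for $\eta_i\perp m$ is most cleanly phrased as the fact that $\{\eta_i\}^\perp$ is a norm-closed ideal containing each $\abs{\mu_n}$ and hence $m$; the set-by-set version requires a little more bookkeeping than your sketch suggests.
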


\begin{proof}
Take an increasing net $\neta\nu$ in
\begin{equation}
\label{L(M)}
\mathcal L(\M)=\{\nu\in ba(\A):\nu\ll m\text{ for some }m\in\Av\M\}
\end{equation}
with $\nu=\lim_\alpha\nu_\alpha\in ba(\A)$. Extract a sequence $\sseq {\nu_{\alpha_n}}n$ 
such that $\norm{\nu-\nu_{\alpha_n}}=(\nu-\nu_{\alpha_n})(\Omega)<2^{-n-1}$, choose 
$m_n\in\Av\M$ such that $m_n\gg\nu_{\alpha_n}$ and define $m=\sum_n2^{-n}m_n\in\Av\M$. 
Since $m\gg\nu_{\alpha_n}$ for each $n\in\N$ so that there is $\delta_n>0$ such that 
$m(A)<\delta_n$ implies $\abs{\nu_{\alpha_n}}(A)<2^{-n-1}$ and, therefore,
$\abs\nu(A)\le\abs{\nu_{\alpha_n}}(A)+2^{-n-1}\le2^{-n}$. Thus $\mathcal L(\M)$
is a normal sublattice of $ba(\A)$ and \eqref{lebesgue} is the Riesz decomposition of $\lambda$ 
with $\lc\in\mathcal L(\M)$ and $\lp\perp\mathcal L(\M)$.
\end{proof}

Of course a different way of stating the same result is the following:

\begin{corollary}
Define $\mathcal L(\M)$ as in \eqref{L(M)}. Then, $\mathcal L(\M)=(\M^\perp)^\perp$.
\end{corollary}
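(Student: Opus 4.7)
The plan is to prove the corollary by double inclusion, using Lemma \ref{lemma lebesgue} in both directions.

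For the easy inclusion $\mathcal L(\M)\subseteq(\M^\perp)^\perp$, I take $\lambda\in\mathcal L(\M)$, so that $\lambda\ll m$ for some $m\in\AM$, together with an arbitrary $\nu\in\M^\perp$. The remark preceding Lemma \ref{lemma lebesgue} tells us that $\nu\perp\M$ is equivalent to $\nu\perp\AM$, so in particular $\nu\perp m$. A short transitivity argument---given $\varepsilon>0$, pick $A$ with $\abs\nu(A)$ and $m(A^c)$ small, then use $\lambda\ll m$ to conclude that $\abs\lambda(A^c)$ is also small---then yields $\nu\perp\lambda$, and hence $\lambda\in(\M^\perp)^\perp$.

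For the reverse inclusion, I would apply Lemma \ref{lemma lebesgue} to any $\lambda\in(\M^\perp)^\perp$ and obtain the decomposition $\lambda=\lc+\lp$. Since $\lp\perp\M$ we have $\lp\in\M^\perp$, so by hypothesis $\lambda\perp\lp$. On the other hand, the decomposition itself (as produced in the proof of the lemma, with $\lc\in\mathcal L(\M)$ and $\lp\perp\mathcal L(\M)$) gives $\lc\perp\lp$, which implies $\abs\lambda=\abs\lc+\abs\lp\ge\abs\lp$. Combined with $\lambda\perp\lp$ this forces $\lp\perp\lp$, which is possible only if $\lp=0$; therefore $\lambda=\lc\in\mathcal L(\M)$.

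The only mildly delicate point is the identity $\abs{\mu_1+\mu_2}=\abs{\mu_1}+\abs{\mu_2}$ when $\mu_1\perp\mu_2$, which I would either cite as a standard fact about the Riesz space $ba(\A)$ or verify directly from the $\varepsilon$-$A$ characterization of orthogonality by splitting partitions across a witnessing set. Apart from that observation, the proof is essentially a repackaging of Lemma \ref{lemma lebesgue} and presents no genuine obstacle beyond the uniqueness argument already baked into the lemma itself.
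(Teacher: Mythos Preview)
Your proposal is correct and is exactly the natural elaboration of the corollary from Lemma \ref{lemma lebesgue}. The paper in fact gives no separate proof at all; it merely introduces the corollary as ``a different way of stating the same result,'' relying implicitly on the observation in the proof of the lemma that $\mathcal L(\M)$ is a normal sublattice (hence a band, so $\mathcal L(\M)=\mathcal L(\M)^{\perp\perp}$) together with the easy identity $\M^\perp=\mathcal L(\M)^\perp$. Your double-inclusion argument simply unpacks these Riesz-space facts by hand, and the ``mildly delicate point'' you flag is indeed standard.
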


Decomposition \eqref{lebesgue} gains a special interest when combined with some form 
of compactness.

\begin{lemma}
\label{lemma sep}
Let $\M\subset ba(\A)_+$ be convex and weak$^*$ compact. $\lambda\perp\M$ if
and only if $\lambda\uorth\M$.
\end{lemma}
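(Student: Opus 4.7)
The direction $\lambda \uorth \M \Rightarrow \lambda \perp \M$ is immediate: for any $\varepsilon > 0$, a set $A$ with $\Psi_\M(A) + |\lambda|(A^c) < \varepsilon$ satisfies $|\mu|(A) + |\lambda|(A^c) \le \Psi_\M(A) + |\lambda|(A^c) < \varepsilon$ for every $\mu \in \M$, so $\mu \perp \lambda$. For the converse my plan is to relax the problem to a minimax on $\M \times \mathcal T$, where $\mathcal T := \{f \in \B(\A) : 0 \le f \le 1\}$, and then threshold the resulting minimizer back to an indicator.

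Set $h(\mu, f) := \mu(f) + |\lambda|(1-f)$. Since $\M$ is convex and weak$^*$ compact in $ba(\A)$, $\mathcal T$ is convex in $\B(\A)$, and $h$ is affine and continuous in each variable (weak$^*$ in $\mu$ because $f$ lies in the predual $\B(\A)$; uniform in $f$ because $\M$ and $|\lambda|$ are norm bounded), Sion's minimax theorem yields
\[
\sup_{\mu \in \M} \inf_{f \in \mathcal T} h(\mu, f) = \inf_{f \in \mathcal T} \sup_{\mu \in \M} h(\mu, f).
\]
The hypothesis $\mu \perp \lambda$ for each $\mu \in \M$ supplies, for every $\varepsilon > 0$, a set $A \in \A$ with $h(\mu, \set A) = \mu(A) + |\lambda|(A^c) < \varepsilon$; hence both sides of the minimax identity equal $0$, and for every $\varepsilon > 0$ there exists $f \in \mathcal T$ with $\sup_{\mu \in \M} \mu(f) + |\lambda|(1-f) < \varepsilon$.

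To return to an indicator, I first approximate $f$ in supremum norm by a simple function $\varphi = \sum_i c_i \set{A_i}$ with disjoint $A_i$ and $0 \le c_i \le 1$; the norm-boundedness of $\M$ and the finiteness of $|\lambda|$ keep the perturbation proportional to $\|f - \varphi\|_\infty$. Setting $B := \bigcup_{c_i \ge 1/2} A_i \in \A$, the pointwise bounds $\varphi \ge \tfrac12 \set B$ and $1-\varphi \ge \tfrac12 \set{B^c}$ give
\[
\Psi_\M(B) + |\lambda|(B^c) \le 2\Bigl(\sup_{\mu \in \M} \mu(\varphi) + |\lambda|(1-\varphi)\Bigr),
\]
which can thus be made arbitrarily small, establishing $\lambda \uorth \M$. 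The principal obstacle will be verifying the hypotheses of Sion's theorem, notably the weak$^*$ continuity of $h(\cdot, f)$ and the compactness-convexity of $\M$; once these are in hand, the threshold passage from $\mathcal T$ back to $\A$ is only a short technical refinement.
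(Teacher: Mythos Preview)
Your proposal is correct and follows essentially the same approach as the paper: apply Sion's minimax theorem using the weak$^*$ compactness of $\M$, then threshold the resulting $[0,1]$-valued function at level $1/2$ via a Chebyshev-type inequality to recover a set in $\A$. The only difference is packaging---you absorb the term $|\lambda|(1-f)$ into the objective $h$ and work in $\B(\A)$ before approximating by simple functions, whereas the paper places this term as a constraint defining the convex set $\K\subset\Sim(\A)$ and applies minimax to $(\mu,f)\mapsto\mu(f)$ directly---but the argument is the same.
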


\begin{proof}
Fix $\varepsilon>0$ and consider the set
\begin{align}
\label{Ke}
\K=\left\{f\in\Sim(\A):1\geq f\geq0,\ \abs\lambda(1-f)<\frac{\varepsilon}{4}\right\}
\end{align}
If $\lambda\perp\M$, then $\sup_{\mu\in\M}\inf_{f\in\K}\mu(f)<\varepsilon/4$.
Endow $ba(\A)$ and $\Sim(\A)$ with the weak$^*$ and the uniform topology respectively. 
Then, both $\M$ and $\K$ are convex, the former is compact and the function 
$\phi(\mu,f)=\mu(f):ba(\A)\times\Sim(\A)\to\R$ is separately linear and continuous. By a 
standard application of Sion's minimax Theorem \cite[Corollary 3.3]{sion}, there exists 
$f\in\K$ such that $\sup_{\mu\in\M}\mu(f)<\varepsilon/4$. Let $A=\{1-f<1/2\}\in\A$. 
Then Tchebiceff inequality implies $\abs\lambda(A^c)+\mu(A)<\varepsilon$ for all 
$\mu\in\M$. The converse is obvious.
\end{proof}

It is of course possible and perhaps instructive to rephrase the preceding Lemma as a 
separating condition.
\begin{corollary}
\label{cor sep}
Either one of the following mutually exclusive conditions holds: (i) $m\gg\lambda$ for some 
$m\in\Av\M$ or (ii) there exists $\eta>0$ such that for each $\M_0\subset\M$ with 
$\mathbf A(\M_0)$ weak$^*$ closed and each $k>0$ there exists $A\in\A$ for which
\begin{equation}
\label{sep}
\abs\lambda(A)>\eta>k\Psi_{\Av{\M_0}}(A)
\end{equation}
If $\A$ is a $\sigma$-algebra and $\lambda\in ca(\A)$ then \eqref{sep} rewrites 
as $\abs\lambda(A)>0=\Psi_{\Av{\M_0}}(A)$ for some $A\in\A$.
\end{corollary}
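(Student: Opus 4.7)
The plan is to combine the generalised Lebesgue decomposition with the uniform orthogonality that Lemma \ref{lemma sep} extracts from weak$^*$ compactness. Start from $\lambda=\lc+\lp$ provided by Lemma \ref{lemma lebesgue}. The case $\lp=0$ is immediate: $\lambda=\lc$ is then dominated by some $m\in\AM$, which is exactly (i).

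In the nontrivial case $\lp\neq 0$ set $\eta=\norm{\lp}/2>0$. Fix any $\M_0\subset\M$ with $\Av{\M_0}$ weak$^*$ closed; since every element of $\Av{\M_0}$ has norm at most $1$, Alaoglu's theorem makes $\Av{\M_0}$ weak$^*$ compact, and it is plainly convex and contained in $ba(\A)_+$. The equivalence $\lp\perp\M\Leftrightarrow\lp\perp\AM$ already noted in the text localises to $\lp\perp\Av{\M_0}$, so Lemma \ref{lemma sep} delivers, for each $\varepsilon>0$, a set $A_\varepsilon\in\A$ with $\abs{\lp}(A_\varepsilon^c)+\Psi_{\Av{\M_0}}(A_\varepsilon)<\varepsilon$. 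Picking $\varepsilon=\eta/(k+1)$ yields $k\Psi_{\Av{\M_0}}(A_\varepsilon)<\eta$ and $\abs{\lp}(A_\varepsilon)>\norm{\lp}-\varepsilon>\eta$. Because \eqref{lebesgue} is a Riesz decomposition of $\lambda$ along a normal sublattice, $\lc$ and $\lp$ are lattice-disjoint and hence $\abs\lambda=\abs{\lc}+\abs{\lp}\geq\abs{\lp}$; this upgrades the estimate to $\abs\lambda(A_\varepsilon)>\eta$, which is \eqref{sep}.

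Mutual exclusion follows from the uniqueness in Lemma \ref{lemma lebesgue}: if $m\gg\lambda$ for some $m\in\AM$ then $\lambda=\lambda+0$ is itself the Lebesgue decomposition, forcing $\lp=0$; the construction just given produces (ii) precisely because $\lp\neq 0$, and that same construction conversely requires $\eta=\norm{\lp}/2>0$. For the countably additive sharpening, Lemma \ref{lemma lebesgue} ensures $\lp\in ca(\A)$ whenever $\lambda\in ca(\A)$, and a Borel--Cantelli extraction finishes the job: choose $A_n$ from Lemma \ref{lemma sep} with tolerance $2^{-n}$ and set $B_N=\bigcap_{n\geq N}A_n$; monotonicity of each $\abs\mu$ gives $\Psi_{\Av{\M_0}}(B_N)\leq\inf_{n\geq N}2^{-n}=0$, while countable additivity gives $\abs{\lp}(B_N^c)\leq\sum_{n\geq N}2^{-n}\to 0$, so $\abs\lambda(B_N)\geq\abs{\lp}(B_N)>0=\Psi_{\Av{\M_0}}(B_N)$ for $N$ large. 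The main technical point is to verify that $\Av{\M_0}$ meets the hypotheses of Lemma \ref{lemma sep}---in particular weak$^*$ compactness via Alaoglu once one recalls the norm bound $\leq 1$---after which everything is propelled by the disjointness identity $\abs\lambda=\abs{\lc}+\abs{\lp}$.
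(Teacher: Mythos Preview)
The paper gives no proof of this corollary---it is stated as an immediate reformulation of Lemma \ref{lemma sep} (``It is of course possible and perhaps instructive to rephrase the preceding Lemma as a separating condition'')---so there is nothing to compare against. Your argument is the natural one: decompose via Lemma \ref{lemma lebesgue}, apply Lemma \ref{lemma sep} to $\lp$ against the weak$^*$ compact set $\Av{\M_0}$ (compactness via Alaoglu plus the norm bound $\le 1$), and upgrade from $\abs{\lp}$ to $\abs\lambda$ using $\abs\lambda=\abs{\lc}+\abs{\lp}$. The Borel--Cantelli extraction for the countably additive refinement is also fine.

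There is, however, a genuine gap in your mutual-exclusion step. You establish (i) $\Leftrightarrow\lp=0$ and $\lp\neq 0\Rightarrow$ (ii), but your sentence ``that same construction conversely requires $\eta=\norm{\lp}/2>0$'' only rules out \emph{your particular} choice of $\eta$; it does not show that (ii) fails for \emph{every} $\eta>0$ when $\lp=0$. To close this, argue directly: if $m=\sum_n\alpha_n\abs{\mu_n}/(1\vee\norm{\mu_n})\gg\lambda$ and $\eta>0$ is given, pick $\delta>0$ with $m(A)<\delta\Rightarrow\abs\lambda(A)\le\eta$, then $N$ with $\sum_{n>N}\alpha_n<\delta/2$, and set $\M_0=\{\mu_1,\dots,\mu_N\}$. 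The set $\Av{\M_0}$ is a finite-dimensional simplex, hence weak$^*$ compact and closed. If $\abs\lambda(A)>\eta$ then $m(A)\ge\delta$, so $\sum_{n\le N}\alpha_n\abs{\mu_n}(A)/(1\vee\norm{\mu_n})\ge\delta/2$, forcing $\Psi_{\Av{\M_0}}(A)\ge\delta/2$; choosing $k>2\eta/\delta$ then makes \eqref{sep} impossible. This shows (i) excludes (ii).
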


Convex, weak$^*$ compact subsets of $ba(\A)$ are often encountered in separation 
problems, where a family $\K$ of $\A$ measurable functions is given and $\M$ is the 
set $\left\{m\in\Pba(\A):\sup_{k\in\K}m(k)\le1\right\}$ of separating probabilities. In 
such special case we learn that $\lambda$ and $\M$ may be strictly separated by a 
set in $\A$.

\section{The Weak Topology}
\label{sec compact}

Decomposition \eqref{lebesgue} provides some useful insight in the study of weakly
compact subsets of $ba(\A)$. An exact characterization is the following:

\begin{theorem}
\label{th compact}
Let $\M$ be norm bounded. Then the following conditions (i)--(v) are mutually equivalent 
and imply (vi). If $\A$ is a $\sigma$ algebra and $\M\subset ca(\A)$, then (vi) implies (iii).
\begin{enumerate}[(i)]
\item\label{uac} $m\uac\M$ for some $m\in\AM$;
\item\label{comp} $\M$ is relatively weakly compact;
\item\label{umc} the set $\{\abs\mu:\mu\in\M\}$ is uniformly monotone continuous, 
i.e. if $\seqn A$ is a monotone sequence in $\A$ the limit $\lim_n\abs\mu(A_n)$ exists 
uniformly in $\M$;
\item\label{tail} for each $\M_0\subset\M$ and each sequence $\seqn A$ in $\A$ such that
\begin{equation}
\label{M sequence}
\lim_j\lim_k\abs\mu\left(\bigcup_{n=j}^{j+k}A_n\right)=0\qquad\mu\in\M_0
\end{equation}
$\mu(A_n)$ converges to $0$ uniformly with respect to $\mu\in\M_0$;
\item\label{uacp} $\M$ possesses the uniform absolute continuity property, i.e. 
$\M_0\subset\M$ and $\lambda\gg\M_0$ imply $\lambda\uac\M_0$;
\item\label{uop} $\M$ possesses the uniform orthogonality property, i.e. $\M_0\subset\M$ 
and $\lambda\perp\M_0$ imply $\lambda\uorth\M_0$.
\end{enumerate}
\end{theorem}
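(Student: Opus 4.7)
The plan is to run the cycle \imply{uac}{comp}, \imply{comp}{umc}, \imply{umc}{tail}, \imply{tail}{uacp}, \imply{uacp}{uac}, then deduce \imply{uacp}{uop}, and finally establish the converse \imply{uop}{umc} under countable additivity. The step \imply{uac}{comp} is the classical Bartle--Dunford--Schwartz characterization of weak compactness \cite{bible}: a norm bounded subset of $ba(\A)$ uniformly absolutely continuous with respect to a bounded positive measure is relatively weakly compact, and $\AM\subset ba(\A)_+$ makes this directly applicable. For \imply{comp}{umc}, relative weak compactness in $ba(\A)$ entails uniform strong additivity; on a monotone sequence $A_n$ the telescoping identity $\abs\mu(A_m)-\abs\mu(A_n)=\abs\mu(A_m\setminus A_n)=\sum_{k=n}^{m-1}\abs\mu(A_{k+1}\setminus A_k)$ then translates uniform $s$-additivity into uniform convergence of $\abs\mu(A_n)$.

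For \imply{umc}{tail} I would apply \iref{umc} to the $k$-increasing sequence $C_{j,k}=\bigcup_{n=j}^{j+k}A_n$ to produce a uniform-in-$\mu$ limit $L(j)=\sup_k\abs\mu(C_{j,k})$; hypothesis \eqref{M sequence} forces $L(j)\downarrow 0$ pointwise, and a diagonal extraction (building a single monotone sequence in $\A$ whose non-uniform convergence would directly violate \iref{umc}) promotes the decrease of $L(j)$ to uniform, so that $\abs{\mu(A_n)}\le L(j)$ for $n\ge j$ closes the argument. The implication \imply{tail}{uacp} is a contradiction argument: if $\lambda\gg\M_0$ but not uniformly, extract $\varepsilon>0$, $\mu_k\in\M_0$ and $A_k\in\A$ with $\abs\lambda(A_k)<4^{-k}$ and $\abs{\mu_k(A_k)}>\varepsilon$; then $\abs\lambda\bigl(\bigcup_{n=j}^{j+k}A_n\bigr)\le 2\cdot 4^{-j}$ and $\mu_k\ll\lambda$ together verify the hypothesis of \iref{tail}, contradicting the existence of the $\mu_k$.

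The main obstacle is \imply{uacp}{uac}, which demands a single $m\in\AM$ dominating all of $\M$---essentially a Halmos--Savage statement in the finitely additive setting. My plan is to iterate Lemma \ref{lemma lebesgue}: starting from some $m_0\in\AM$, if $m_0$ fails to dominate a specific $\mu\in\M$ then the Lebesgue decomposition of $\mu$ against the band generated by $m_0$ has nontrivial singular part, which can be absorbed into a refined convex combination; the bounded-variation argument used in the proof of Lemma \ref{lemma lebesgue} then forces countable termination and produces the desired $m\in\AM$. Given such an $m$, \iref{uacp} applied to $\M_0=\M$ and $\lambda=m$ delivers \iref{uac}.

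For \imply{uacp}{uop}, I would apply the just-proved equivalence \bimply{uac}{uacp} to the subfamily $\M_0$ itself (legal since \iref{uacp} is hereditary) to secure $m_0\in\mathbf A(\M_0)$ with $m_0\uac\M_0$; since $m_0$ is a countable convex combination of $\abs{\mu_n}/(1\vee\norm{\mu_n})$ with $\mu_n\in\M_0$, orthogonality of $\lambda$ against each $\mu_n$ propagates to $\lambda\perp m_0$, whence Lemma \ref{lemma sep} applied to the convex weak$^*$ compact singleton $\{m_0\}$ gives $\lambda\uorth m_0$, which combined with $m_0\uac\M_0$ yields $\lambda\uorth\M_0$. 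Finally, \imply{uop}{umc} under countable additivity is a Vitali--Hahn--Saks style argument: a failure of \iref{umc} would expose sequences $\mu_k\in\M$ and $A_k\downarrow\emptyset$ in $\A$ with $\abs{\mu_k}(A_k)>\varepsilon$, and in the $\sigma$-additive setting the uniform orthogonality property \iref{uop} applied to the tail algebra generated by $\seqn A$ produces the required contradiction.
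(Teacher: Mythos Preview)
Your cycle has a genuine gap at \imply{umc}{tail}. You write $L(j)=\sup_k\abs\mu(C_{j,k})$, but this quantity depends on $\mu$; call it $L_\mu(j)$. Hypothesis \eqref{M sequence} gives $L_\mu(j)\downarrow 0$ pointwise in $\mu\in\M_0$, and you want to upgrade this to uniform via ``a diagonal extraction building a single monotone sequence in $\A$''. If $\A$ were a $\sigma$-algebra the decreasing sets $D_j=\bigcup_{n\ge j}A_n$ would do the job immediately, but in a mere algebra $D_j\notin\A$ in general, and I do not see how to manufacture a monotone sequence in $\A$ whose failure of uniform convergence encodes the non-uniformity of $L_\mu(j)\to0$: the $A_n$ may overlap arbitrarily, so neither the increasing unions $\bigcup_{p\le m}C_{j_p,k_p}$ nor any obvious variant isolates tail mass from head mass. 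The paper does not attempt a direct argument here. It first proves \imply{umc}{uop} by a separate (and rather intricate) construction, and only then proves \imply{umc}{tail} by forming the Banach-limit measure $\gamma(A)=\LIM_n\abs{\mu_n}(A_n\cap A)$, showing via \iref{umc} that $\gamma(B_j)\ge\varepsilon/2$ for suitable $B_j$ while $\abs\mu(B_j)\to0$ for each $\mu\in\M_0$, and then invoking the already-established \iref{uop} to derive a contradiction from $\gamma_{\M_0}^\perp\ne0$. Your ordering, which postpones \iref{uop} until after \iref{uacp}, removes the tool the paper actually uses.

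Two smaller points. Your route to \iref{uop} from \iref{uacp}---pass to $\M_0$, use the just-proved equivalence to get $m_0\in\mathbf A(\M_0)$ with $m_0\uac\M_0$, note $\lambda\perp m_0$ since orthogonal complements are bands, then combine $\lambda\uorth\{m_0\}$ with $m_0\uac\M_0$---is correct and pleasantly shorter than the paper's direct \imply{umc}{uop} argument. On the other hand, your sketches for \imply{uacp}{uac} and for the conditional \imply{uop}{umc} are too thin: the former needs the quantitative device $\chi(m)=\sup_{\mu\in\M}\norm{\mu_m^\perp}$ (or something equivalent) to force the iteration to terminate, and the latter hinges on the same Banach-limit construction $\gamma$ together with the observation that its purely finitely additive part is nonzero and orthogonal to $ca(\A)\supset\M$---an idea absent from your ``tail algebra'' description.
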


\begin{proof}
\imply{uac}{comp}. This is just  \cite[IV.9.12]{bible}.

\imply{comp}{umc}. Let $\seqn A$ be a decreasing 
sequence in $\A$ and define $\phi_n:ba(\A)\to\R$ by letting 
$\phi_n(\mu)=\lim_k\abs\mu(A_n\cap A_k^c)$. 
Then, $\phi_n$ is continuous and decreases to $0$ on the weak closure of $\M$ which, 
under \iref{comp}, is compact. By Dini's Theorem, convergence is uniform. 

\imply{umc}{uop}. Suppose $\lambda\perp\M$ and let $\M_1=\clt\AM$. With no 
loss of generality we can assume $\lambda\ge0$. We claim that $\lambda\perp\M_1$. 
If not then there is $m\in\M_1$ such that for some $\eta>0$ and all $A\in\A$, the 
inequality $4\eta<m(A)+\lambda(A^c)$ obtains. Fix $m_1\in\AM$ such that 
$\abs{(m-m_1)(\Omega)}<\eta/2$ and $A_1\in\A$ such that 
$m_1(A_1)+\lambda(A_1^c)<\eta$. Assume that $m_1\ldots,m_{n-1}\in\AM$ and 
$A_1,\ldots,A_{n-1}\in\A$ have been chosen such that 
\begin{equation}
\label{seq}
m_i(A_i)+\sum_{j\le i}\lambda(A_i^c)<\eta
\quad\text{and}\quad
\dabs{(m_i-m)\left(\bigcap_{j<i}A_j\right)}<\eta2^{-i}
\qquad i=1,\ldots,n-1
\end{equation}
Then pick $m_n\in\AM$ such that $\abs{(m_n-m)(\bigcap_{j<n}A_j)}<\eta2^{-n}$ 
and, by orthogonality, $A_n\in\A$ such that 
$m_n(A_n)+\lambda(A_n^c)<\eta-\sum_{k=1}^{n-1}\lambda(A_k^c)$. This proves, 
by induction that it is possible to construct two sequences $\seqn m$ in $\AM$ and 
$\seqn A$ in $\A$ that satisfy property \eqref{seq} for each $n\in\N$. It is then implicit 
that for all $n,p\in\N$
\begin{align*}
m_n\left(\bigcap_{i=1}^{n+p}A_i\right)+\lambda\left(\bigcup_{i=1}^{n+p}A_i^c\right)
&\le m_n\left(A_n\right)+\sum_{i=1}^n\lambda(A_i^c)+\sum_i\lambda(A_i^c)<2\eta
\end{align*}
and so $(m-m_n)\left(\bigcap_{i=1}^{n+p}A_i\right)>2\eta$. Observe that, under 
\iref{umc}, $\AM$ is uniformly monotone continuous and so one may fix $k$ sufficiently 
large so that $\inf_n(m-m_n)\left(\bigcap_{i=1}^kA_i\right)>\eta$, contradicting \eqref{seq}. 
Thus $\lambda\perp\M_1$ and, by Lemma \ref{lemma sep}, $\lambda\perp\Psi_{\M_1}$ 
so that $\lambda\uorth\M$. Given that property \iref{umc} extends from $\M$ to each of 
its subsets, then so does the conclusion just obtained and \iref{uop} is proved.

\imply{umc}{tail}. Let $\M_0$ and $\seqn A$ be as in \iref{tail}. Suppose that, up to the 
choice of a subsequence, there is $\varepsilon$ and a sequence $\seqn\mu$ in $\M_0$ 
such that $\abs{\mu_n}(A_n)>\varepsilon$. By \iref{umc}, for each $n$ there exists 
$k_n>n$ such that
$$
\sup_{\{\mu\in\M_0,\ p\in\N\}}\abs\mu\left(\bigcup_{i=n}^{k_n+p}A_i\right)
-\abs\mu\left(\bigcup_{i=n}^{k_n}A_i\right)<\varepsilon/2
$$
Define $\gamma\in ba(\A)$ implicitly by setting
\begin{equation}
\label{gamma}
\gamma(A)=\LIM_n\abs{\mu_n}(A_n\cap A)\qquad A\in\A
\end{equation}
where $\LIM$ denotes the Banach limit. If $B_j=\bigcup_{i=j}^{k_j}A_i$, one easily 
concludes
\begin{align*}
\gamma(B_j)&=\LIM_{n>j}\abs{\mu_n}(A_n\cap B_j)
>\LIM_{n>j}\abs{\mu_n}\left(A_n\cap \bigcup_{i=j}^{k_j+n}A_i\right)-\varepsilon/2
=\LIM_{n>j}\abs{\mu_n}(A_n)-\varepsilon/2
\ge\varepsilon/2
\end{align*}
while, under \eqref{M sequence}, $\lim_j\abs\mu(B_j)=0$. By Lemma 
\ref{lemma lebesgue}, $\gamma_{\M_0}^\perp\ne0$ and, by (\textit{\ref{uop}}),  
$\gamma_{\M_0}^\perp\uorth\M_0$ in contrast with the definition \eqref{gamma}.

\imply{tail}{uacp}. Let $\M_0\subset\M$ and $\lambda\gg\M_0$. 
For each $n\in\N$ let $A_n\in\A$ be such that $\abs\lambda(A_n)<2^{-n}$. Then 
$\sup_k\abs\lambda(\bigcup_{i=j}^kA_j)<2^{-j}$ so that, by \iref{tail}, 
$\abs\mu(A_n)$ converges to $0$ uniformly in $\M_0$.

\imply{uacp}{uac}. For each $m\in\AM$, let 
$$
\chi(m)=\sup_{\mu\in\M}\norm{\mu_m^\perp}\qquad\text{and} \qquad 
\chi(\M)=\inf_{m\in\Av\M}\chi(m)
$$ 
If $\seqn m$ is a sequence in $\AM$ such that $\chi(m_n)<\chi(\M)+2^{-n}$ and
if we define $m=\sum_n2^{-n}m_n\in\Av\M$, then from $m\gg m_n$ we conclude 
$\chi(m)=\chi(\M)$. Fix $\gamma_1=m$ and let $\mu_1\in\M$ and $A_1\in\A$ be 
such that $\gamma_1(A_1)<2^{-2}$ and $\abs{\mu_1}(A_1)\ge\chi(\M)/2$. 
Assume that $\mu_1,\ldots,\mu_{n-1}\in\M$ and $A_1,\ldots,A_{n-1}\in\A$ have been 
chosen so that, letting $\gamma_i=\frac{1}{i}(m+\abs{\mu_1}+\ldots+\abs{\mu_{i-1})}$,
\begin{equation}
\gamma_{n-1}(A_{n-1})<2^{-2(n-1)}\qquad \abs{\mu_{n-1}}(A_{n-1})\ge\chi(\M)/2
\end{equation}
Since $\gamma_n\gg m$, then $\chi(\gamma_n)=\chi(\M)$. There exists then 
$\mu_n\in\M$ such that $\norm{(\mu_n)_{\gamma_n}^\perp}>\chi(\M)/2$ and 
thus a set $A_n\in\A$ such that $\abs{\mu_n}(A_n)\ge\chi(\M)/2$ while 
$\gamma(A_n)<2^{-2n}$. It follows by induction that there are sequences $\seqn\mu$ 
and $\seqn A$ such that for all $n$
\begin{align*}
\sup_{i< n}\abs{\mu_i}(A_{n-1})<2^{-n}
\quad\text{and}\quad
\abs{\mu_n}(A_n)\ge\chi(\M)/2
\end{align*}
Let $\mu=\sum_n2^{-n}\abs{\mu_n}$. \iref{uacp} implies that the sequence 
$\seqn\mu$ is uniformly absolutely continuous with respect to $\mu$; on the other 
hand,
\begin{equation*}
\mu(A_k)=\sum_n2^{-n}\abs{\mu_n}(A_k)\le
\sum_{n=1}^k2^{-n}\abs{\mu_n}(A_k)+2^{-k}
\le2^{-(k-1)}
\end{equation*}
so that $\chi(\M)\le2\lim_k\abs{\mu_k}(A_k)=0$. But then
$\chi(m)=0$ i.e. $m\gg\M$ and, by (\textit{v}), $m\uac\M$.

\imply{uop}{umc}. Let $\A$ be a $\sigma$ algebra and $\M\subset ca(\A)$. 
Consider a decreasing sequence $\seqn B$ in $\A$ and let 
$A_n=B_n\backslash \bigcap_kB_k$. If there exists $\varepsilon>0$ and a 
sequence $\seqn\mu$ in $\M$ such that $\lim_n\abs{\mu_n}(A_n)>\varepsilon$, 
define $\gamma\in ba(\A)$ as in \eqref{gamma}. It is obvious that 
$\gamma(A_n)>\varepsilon$ so that $\gamma$ is not countably additive 
i.e. its purely fintely additive part, $\gamma^\perp$, is non zero. However, 
$\gamma^\perp\perp\M$ while, by construction, $\gamma\le\Psi_\M$, contradicting 
\iref{uop}.
\end{proof}

We also conclude 

\begin{corollary}
\label{cor compact}
Let $\M\subset ba(\A)$ be relatively weakly compact. Then, (i) $\lambda\perp\M$ if 
and only if $\lambda\perp_u\clt\AM$ and (ii) $m\in\clt\AM$ implies $m_\M^\perp=0$.
\end{corollary}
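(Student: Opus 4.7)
The plan is to combine Lemma \ref{lemma sep} (for part (i)) with Lemma \ref{lemma lebesgue} (for part (ii)). The essential preliminary is that $\clt\AM$ is convex (being the weak$^*$ closure of a convex set), norm bounded by $1$ and hence weak$^*$ compact by Banach--Alaoglu, and contained in $ba(\A)_+$ (weak$^*$ limits of non-negative set functions are non-negative). Thus the hypotheses of Lemma \ref{lemma sep} are met with $\M$ replaced by $\clt\AM$.

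For part (i), the \emph{if} direction is immediate: $\lambda\uorth\clt\AM$ yields $\lambda\perp\clt\AM\supset\AM$, and $\lambda\perp\AM\Leftrightarrow\lambda\perp\M$ was noted in section \ref{sec lebesgue}. For the converse I would start from $\lambda\perp\M$, hence $\lambda\perp\AM$, and upgrade this to $\lambda\perp\clt\AM$ using the uniform monotone continuity of $\AM$ guaranteed by \iref{umc} of Theorem \ref{th compact}. This upgrade is precisely what the first paragraph of the proof of \imply{umc}{uop} accomplishes (with $\M_1=\clt\AM$), so I would invoke it verbatim, reducing to $\lambda\ge0$ if needed since both sides of the desired equivalence depend on $\lambda$ only through $\abs\lambda$. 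Lemma \ref{lemma sep} applied to $\clt\AM$ then yields $\lambda\uorth\clt\AM$.

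For part (ii), I would apply Lemma \ref{lemma lebesgue} to $m\in\clt\AM$ to obtain $m=m_\M^c+m_\M^\perp$ with $m_\M^\perp\perp\M$. Because $m\ge0$, the positivity clause of Lemma \ref{lemma lebesgue} forces $0\le m_\M^\perp\le m$. Applying part (i) with $\lambda=m_\M^\perp$ gives $m_\M^\perp\uorth\clt\AM$, so for every $\varepsilon>0$ there is $A\in\A$ with $\Psi_{\clt\AM}(A)+m_\M^\perp(A^c)<\varepsilon$. Then $m_\M^\perp(A)\le m(A)\le\Psi_{\clt\AM}(A)<\varepsilon$ and $m_\M^\perp(A^c)<\varepsilon$ together give $\norm{m_\M^\perp}<2\varepsilon$, and arbitrariness of $\varepsilon$ forces $m_\M^\perp=0$.

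The step I expect to be the main obstacle is the implication $\lambda\perp\AM\Rightarrow\lambda\perp\clt\AM$ used in part (i). Orthogonality is not preserved under weak$^*$ limits in general, so recovering it here cannot be done by a soft continuity argument; it is exactly the point where uniform monotone continuity (hence relative weak compactness of $\M$) must enter, via the inductive construction in the proof of Theorem \ref{th compact}. Once that single bridge is crossed, both (i) and (ii) fall out by the routine bookkeeping above.
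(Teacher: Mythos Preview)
Your proposal is correct and follows essentially the same route as the paper: invoke the argument in \imply{umc}{uop} of Theorem \ref{th compact} to pass from $\lambda\perp\M$ to $\lambda\perp\clt\AM$, then apply Lemma \ref{lemma sep} to the convex weak$^*$ compact set $\clt\AM$ for part (i), and combine (i) with Lemma \ref{lemma lebesgue} for part (ii). The only cosmetic difference is that the paper cites Corollary \ref{cor sep} rather than Lemma \ref{lemma sep} at the last step of (i), but the content is the same, and your detailed computation for (ii) spells out exactly the bookkeeping the paper leaves implicit.
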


\begin{proof}
In the proof of the implication \imply{umc}{uop} of Theorem \ref{th compact} we 
showed that $\lambda\perp\M$ if and only if $\lambda\perp\clt\AM$. (\textit{i})
then follows from Corollary \ref{cor sep}; the second from (\textit{i}) and 
Lemma \ref{lemma lebesgue}.
\end{proof}

Theorem \ref{th compact} has a number of implications which help clarifying the relationship
with other well known criteria for relative weak compactness. For example, $\M$ is
relatively weakly compact if and only if $\{\abs\mu:\mu\in\M\}$ is so. Moreover, all 
disjoint sequences of sets satisfy condition \eqref{M sequence} (by boundedness) so that 
if $\M$ is relatively weakly compact then necessarily $m(A_n)$ converges to $0$ uniformly 
in $\M$ for every disjoint sequence, a property of weakly convergent sequences already 
outlined in \cite[Theorem 8.7.3]{rao}. Another immediate consequence of Theorem 
\ref{th compact} is that a subset of $ca(\A)$ is relatively weakly compact if and only if 
norm bounded and uniformly countably additive or, equivalently, uniformly absolutely 
continuous with respect to some $\lambda\in ca(\A)$, see \cite[IV.9.1 and IV.9.2]{bible}.

Another characterization of weak compactness is given in the following Theorem 
\ref{th compact 2}. A sequence $\seqn f$ in $\Sim(\A)$ is said to be uniformly bounded 
whenever $\sup_n\norm{f_n}<\infty$.

\begin{theorem}
\label{th compact 2}
In the following, conditions \iref{rwc}--\iref{un cauchy} are equivalent and imply \iref{lim}:
\begin{enumerate}[(i)]
\item\label{rwc} $\M$ is relatively weakly compact;
\item\label{un cauchy} $\M$ is bounded and possesses the uniform Cauchy property, i.e. 
if $\M_0\subset\M$ and $\seqn f$ is a uniformly bounded sequence in $\Sim(\A)$ which is 
Cauchy in $L^1(\mu)$ for all $\mu\in\M_0$, then
\begin{equation}
\label{unCa}
\lim_n\sup_{\mu\in\M_0}\sup_{p,q}\abs\mu(\abs{f_{n+p}-f_{n+q}})=0
\end{equation}
\item\label{lim} $\M$ is bounded and for each sequence $\seqn f$ as in \iref{un cauchy}
and each sequence $\seq\mu k$ in $\M$
\begin{equation}
\label{limits}
\LIM_k\lim_n\mu_k(f_n)=\lim_n\LIM_k\mu_k(f_n)
\end{equation}
\end{enumerate}
\end{theorem}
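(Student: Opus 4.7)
My plan is to prove the cycle \imply{rwc}{un cauchy}, \imply{un cauchy}{rwc} and \imply{un cauchy}{lim}, using Theorem \ref{th compact} as the bridge for the first two directions and a direct Banach limit computation for the last.

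For \imply{rwc}{un cauchy} I fix $\M_0\subset\M$ and a uniformly bounded sequence $\seqn f$ in $\Sim(\A)$ with $\sup_n\norm{f_n}\le N$, Cauchy in $L^1(\mu)$ for each $\mu\in\M_0$. Since $\M_0$ inherits relative weak compactness from $\M$, Theorem \ref{th compact} produces some $m=\sum_k\alpha_k\abs{\mu_k}/(1\vee\norm{\mu_k})\in\Av{\M_0}$ with $m\uac\M_0$. A series truncation argument (choosing $K$ so that $\sum_{k>K}\alpha_k$ is as small as desired, then invoking pointwise Cauchyness in the finitely many $L^1(\abs{\mu_k})$ for $k\le K$) first shows that $\seqn f$ is Cauchy in $L^1(m)$, uniformly in $p,q$. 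Given $\varepsilon>0$ I then pick $\varepsilon_1$ with $\varepsilon_1\sup_{\mu\in\M_0}\norm\mu<\varepsilon/2$, let $B_{n,p,q}=\{\abs{f_{n+p}-f_{n+q}}>\varepsilon_1\}\in\A$, and split
\begin{equation*}
\abs\mu(\abs{f_{n+p}-f_{n+q}})\le 2N\abs\mu(B_{n,p,q})+\varepsilon/2.
\end{equation*}
Tchebiceff bounds $m(B_{n,p,q})\le m(\abs{f_{n+p}-f_{n+q}})/\varepsilon_1$, which is eventually small enough to trigger uniform absolute continuity $m\uac\M_0$ and force the first summand below $\varepsilon/2$, uniformly in $\mu\in\M_0$ and in $p,q$.

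For \imply{un cauchy}{rwc}, I verify condition \iref{umc} of Theorem \ref{th compact}. Given a monotone sequence $\seqn A$ in $\A$, set $f_n=\set{A_n}$; each $\abs\mu(A_n)$ is monotone and bounded by $\norm\mu$, hence convergent and in particular Cauchy. In the decreasing case $\abs\mu(\abs{f_{n+p}-f_{n+q}})=\abs\mu(A_{n+p\wedge q})-\abs\mu(A_{n+p\vee q})\to 0$ uniformly in $p,q$ as $n\to\infty$, and the increasing case is symmetric, so $\seqn f$ is Cauchy in $L^1(\mu)$ for every $\mu\in\M$. Applying \iref{un cauchy} with $\M_0=\M$ yields uniform Cauchyness of $\abs\mu(A_n)$ and therefore uniform convergence over $\M$. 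This is exactly \iref{umc} of Theorem \ref{th compact}, which then gives relative weak compactness.

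For \imply{un cauchy}{lim}, each $L_k=\lim_n\mu_k(f_n)$ exists by hypothesis; the inequality $\abs{\mu_k(f_n)-\mu_k(f_m)}\le\abs{\mu_k}(\abs{f_n-f_m})$ together with \iref{un cauchy} and $m\to\infty$ gives $\sup_k\abs{\mu_k(f_n)-L_k}\to 0$ as $n\to\infty$. Linearity and boundedness of the Banach limit then produce
\begin{equation*}
\dabs{\LIM_k\mu_k(f_n)-\LIM_k L_k}=\dabs{\LIM_k(\mu_k(f_n)-L_k)}\le\sup_k\abs{\mu_k(f_n)-L_k}\to 0,
\end{equation*}
which is exactly \eqref{limits}. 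The main obstacle is the translation in \imply{rwc}{un cauchy} of pointwise $L^1(\mu)$-Cauchyness into uniform Cauchyness across $\M_0$: the combination of the series reduction to $L^1(m)$ with the Tchebiceff cut-and-split exploiting uniform absolute continuity is what carries the real weight of the proof.
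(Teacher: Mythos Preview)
Your proof is correct and follows essentially the same route as the paper: both directions of the equivalence go through Theorem~\ref{th compact} (condition \iref{umc}) via indicators of monotone sequences and via uniform absolute continuity with respect to some $m\in\AM$ combined with a Tchebiceff cut, and the implication \imply{un cauchy}{lim} is in both cases a direct estimate using the uniform Cauchy bound. Your treatment is slightly more careful in two places---you work with an arbitrary $\M_0\subset\M$ in \imply{rwc}{un cauchy} (as the statement requires) and you spell out the series truncation showing $\seqn f$ is Cauchy in $L^1(m)$---but these are refinements of presentation, not of method.
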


\begin{proof}
\imply{rwc}{un cauchy} If $\M$ is relatively weakly compact it is bounded and uniformly 
absolutely continuous with respect to some $m\in\AM$. If $\seqn f$ is uniformly bounded 
and Cauchy in $L^1(\mu)$ for all $\mu\in\M$, then it is Cauchy in $L^1(m)$ too. 
Moreover, given that
$$
\abs\mu(\dabs{f_{k+p}-f_{k+q}})\le 
2\sup_n\norm{f_n}\ \abs\mu^*(\dabs{f_{k+p}-f_{k+q}}\ge c)%
+c\sup_{\mu\in\M}\norm\mu
$$
\eqref{unCa} follows from uniform absolute continuity.

\imply{un cauchy}{lim} It follows from the inequality
\begin{align*}
\dabs{\LIM_k\lim_n\mu_k(f_n)-\lim_i\LIM_k\mu_k(f_i)}\le
\lim_n\sup_k\sup_{p,q}\abs{\mu_k}(\abs{f_{n+p}-f_{n+q}})
\end{align*}

\imply{un cauchy}{rwc} Choose the sequence $\seqn f$ in \iref{un cauchy} to consist 
of indicators of a decreasing sequence $\seqn A$ of $\A$ measurable sets. Then 
\eqref{unCa} implies that $\{\abs\mu:\mu\in\M\}$ is uniformly monotone continuous. 
\end{proof}

\section{The Representation of Continuous Linear Functionals on $ba(\A)$}
\label{sec riesz}

The class of sequences introduced in Theorem \ref{th compact 2} will be in this
section the basis to obtain a rather precise representation of continuous linear 
functionals on $ba(\A)$. To this end we need some additional notation. $f\in\B(\A)$ 
and $\mu\in ba(\A)$ admit the Stone space representation as 
$\tilde f\in\mathcal C(\tilde\A)$ and $\tilde\lambda\in ca(\sigma\tilde\A)$ where 
$\tilde\A$ is the algebra of all clopen sets of a compact, Hausdorff, totally disconnected 
space $\tilde\Omega$ such that $\mu(f)=\tilde\mu(\tilde f)$, \cite{bible}.

In the following we also use $\mathscr L(\A)$ for the space of continuous linear 
operators $T:ba(\A)\to ba(\A)$ and $\mathscr L_*(\A)$ for the subspace of those 
$T\in\mathscr L(\A)$ possessing the additional property
\begin{equation}
\label{T}
T(\mu_f)=T(\mu)_f
\qquad f\in L^1(\mu),\mu\in ba(\A)
\end{equation}
Remark that if $A,A_1,\ldots,A_N\in\A$ with $A_n\cap A_m=\emp$ for $n\ne m$,
then \eqref{T} implies
\begin{align*}
\sum_{n=1}^N\abs{T(\mu)(A\cap A_n)}=\sum_{n=1}^N\abs{T(\mu_{A\cap A_n})(\Omega)}
\le\norm T\sum_{n=1}^N\norm{\mu_{A\cap A_n}}
=\norm T\sum_{n=1}^N\abs{\mu}(A\cap A_n)
\le\norm T\abs\mu(A)
\end{align*}
so that $\abs{T(\mu)}\le\norm T\abs\mu$, i.e. $T(\mu)\in ba_\infty(\A,\mu)$. 
Eventually, if $T\in\mathscr L(\A)$ let $T_\lambda$ denote its restriction to 
$ba(\A,\lambda)$.
\begin{proposition}
\label{pro riesz}
$ba(\A)^*$ is isometrically isomorphic to the space $\mathscr L_*(\A)$ and the 
corresponding elements are related via the identity
\begin{equation}
\label{riesz seq}
\phi(\mu)=T(\mu)(\Omega)\qquad \mu\in ba(\A)
\end{equation}
Moreover, there is a sequence $\seqn {f^\lambda}$ in $\Sim(\A)$ uniformly bounded
by $\norm{T_\lambda}$ which is Cauchy in $L^1(\mu)$ for all $\mu\in ba(\A,\lambda)$ 
and such that
\begin{equation}
\label{riesz seq}
\limsup_n\norm{f^\lambda_n}=\norm{T_\lambda}
\quad\text{and}\quad
T(\mu)=\lim_n\mu(f^\lambda_n) \qquad \mu\in ba(\A,\lambda)
\end{equation}
If $T$ is positive then $\seqn{f^\lambda}$ can be chosen to be positive.
\end{proposition}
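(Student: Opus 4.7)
The plan is to split the statement into two parts: the isometric isomorphism $ba(\A)^{*}\cong\mathscr L_{*}(\A)$, and the Stone space construction of the approximating sequence. For the first, given $\phi\in ba(\A)^{*}$ I would set $T(\mu)(A):=\phi(\mu_{A})$ and verify $T\in\mathscr L_{*}(\A)$. Finite additivity of $A\mapsto\phi(\mu_{A})$ is immediate. For norm control, given a finite partition $A_{1},\ldots,A_{N}$ of $\Omega$, pick signs $\epsilon_{n}=\sgn\phi(\mu_{A_{n}})$ and write $\sum_{n}\abs{T(\mu)(A_{n})}=\abs{\phi(\sum_{n}\epsilon_{n}\mu_{A_{n}})}\le\norm\phi\norm\mu$, using $\norm{\mu_{g}}=\int\abs g d\abs\mu=\abs\mu(\Omega)$ for $g=\sum_{n}\epsilon_{n}\set{A_{n}}$. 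Taking the sup over partitions yields $\norm{T(\mu)}\le\norm\phi\norm\mu$. Property \eqref{T} is obvious for simple $f$ by linearity and extends to $L^{1}(\mu)$ by density together with the bound $\abs{T(\mu)}\le\norm T\abs\mu$ derived from the indicator case in the paragraph preceding the Proposition. The inverse map $T\mapsto T(\cdot)(\Omega)$ is clearly linear and two-sided, and $\abs{\phi(\mu)}=\abs{T(\mu)(\Omega)}\le\norm{T(\mu)}$ combined with the previous estimate gives the isometry $\norm\phi=\norm T$.

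For the sequence I would pass to the Stone space $\tilde\Omega$. Under the Stone duality, $\B(\A)\cong\mathcal C(\tilde\Omega)$ and each $\mu\in ba(\A)$ corresponds to a unique $\tilde\mu\in ca(\sigma\tilde\A)$ with $\mu(f)=\tilde\mu(\tilde f)$; applying Radon-Nikodym on the countably additive side yields an isometric isomorphism $ba(\A,\lambda)\cong L^{1}(\tilde\lambda)$. The restriction $\phi_{\lambda}:=\phi|_{ba(\A,\lambda)}$ thus corresponds to some $g\in L^{\infty}(\tilde\lambda)$ with $\norm g_{\infty}=\norm{\phi_{\lambda}}$, and the same signed-partition argument shows $\norm{\phi_{\lambda}}=\norm{T_{\lambda}}$.

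To produce $\seqn{f^{\lambda}}$ I would approximate $g$ in two stages: $\mathcal C(\tilde\Omega)$ is $L^{1}(\tilde\lambda)$-dense, and $\Sim(\tilde\A)$ is uniformly dense in $\mathcal C(\tilde\Omega)$ by Stone-Weierstrass (the clopen algebra $\tilde\A$ separates the points of the totally disconnected $\tilde\Omega$). This yields $\tilde f_{n}\in\Sim(\tilde\A)$ with $\tilde f_{n}\to g$ in $L^{1}(\tilde\lambda)$; truncating at $\pm\norm g_{\infty}$ preserves simplicity and the limit while enforcing $\norm{\tilde f_{n}}_{\infty}\le\norm{T_{\lambda}}$. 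Pulling back through Stone duality gives $\seqn{f^{\lambda}}$ in $\Sim(\A)$ with the same sup-norm bound. Since $L^{1}(\tilde\lambda)$-convergence implies $\tilde\lambda$-convergence in measure, every $\tilde\mu\ll\tilde\lambda$ sees $\tilde f_{n}\to g$ in $\tilde\mu$-measure, and bounded convergence delivers $\tilde f_{n}\to g$ in $L^{1}(\tilde\mu)$. Translating back, $\seqn{f^{\lambda}}$ is Cauchy in $L^{1}(\mu)$ for every $\mu\in ba(\A,\lambda)$, and $\mu(f_{n}^{\lambda})=\tilde\mu(\tilde f_{n})\to\tilde\mu(g)=\phi(\mu)=T(\mu)(\Omega)$; the full identity $T(\mu)=\lim_{n}\mu_{f_{n}^{\lambda}}$ then follows from the $\mathscr L_{*}$-property $T(\mu)(A)=T(\mu_{A})(\Omega)$. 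The equality $\limsup_{n}\norm{f_{n}^{\lambda}}=\norm{T_{\lambda}}$ comes from the duality $\norm g_{\infty}=\sup_{\norm h_{L^{1}}=1}\abs{\int gh\,d\tilde\lambda}$ combined with $\int\tilde f_{n}h\,d\tilde\lambda\to\int gh\,d\tilde\lambda$, which forces $\liminf_{n}\norm{\tilde f_{n}}_{\infty}\ge\norm g_{\infty}$. For positive $T$, $\phi$ is positive, hence $g\ge 0$ $\tilde\lambda$-a.e.; replacing $\tilde f_{n}$ by $\tilde f_{n}\vee 0$ gives positive simple approximants with all properties preserved.

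The main difficulty lies in securing the Cauchy property in $L^{1}(\mu)$ for \emph{every} $\mu\ll\lambda$, rather than only for $\lambda$ itself. Attempting this directly in the finitely additive setting is awkward because neither bounded nor dominated convergence is available for an arbitrary $\mu\in ba(\A,\lambda)$. The Stone space reduction is what rescues the argument: it converts the problem to the countably additive setting, where the combination of a uniform $L^{\infty}$ bound with convergence in $\tilde\lambda$-measure automatically yields $L^{1}(\tilde\mu)$-convergence for every $\tilde\mu\ll\tilde\lambda$.
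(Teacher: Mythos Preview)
Your proposal is correct and follows essentially the same route as the paper: define $T(\mu)(A)=\phi(\mu_A)$, prove the isometry via the partition estimate, pass to the Stone space to obtain $\tilde f^\lambda\in L^\infty(\tilde\lambda)$ with $\norm{\tilde f^\lambda}_\infty=\norm{T_\lambda}$, approximate by $\Sim(\tilde\A)$ functions, argue $L^1(\tilde\mu)$ convergence for every $\tilde\mu\ll\tilde\lambda$, and pull back. The only cosmetic difference is in the approximation step---the paper uses dyadic truncation in $\Sim(\sigma\tilde\A)$ followed by replacing $\sigma\tilde\A$ sets with nearby $\tilde\A$ sets, whereas you invoke Stone--Weierstrass density of $\Sim(\tilde\A)$ in $\mathcal C(\tilde\Omega)$ together with $L^1$-density of continuous functions; both yield the same uniformly bounded Cauchy sequence and handle the positive case equally well.
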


\begin{proof}
If $T\in\mathscr L_*(\A)$ it is obvious that the right hand side of \eqref{riesz seq} implicitly 
defines a continuous linear functional on $ba(\A)$ and 
that $\norm\phi\le\norm T$. Conversely, let $\phi\in ba(\A)^*$, fix $\mu\in ba(\A)$ 
and define the set function $T(\mu)$ on $\A$ implicitly by letting 
\begin{equation}
\label{restr}
T(\mu)(A)=\phi(\mu_A)\qquad A\in\A
\end{equation}
$T(\mu)$ is additive by the linearity of $\phi$. Moreover, if $A_1,\ldots,A_N\in\A$ are 
disjoint then
\begin{align*}
\sum_{n=1}^N\abs{T(\mu)(A\cap A_n)}
=\sum_{n=1}^N\abs{\phi(\mu_{A\cap A_n})}
\le\sum_{n=1}^N\norm\phi\norm{\mu_{A\cap A_n}}
=\sum_{n=1}^N\norm\phi\abs\mu(A\cap A_n)
=\norm\phi\abs\mu(A)
\end{align*}
so that $\abs{T(\mu)}\le\norm\phi\abs\mu$. It follows that $T(\mu)\in ba_\infty(\A,\mu)$ 
and $\norm T\le\norm\phi$. Since $(\mu_A)_B=\mu_{A\cap B}$, we conclude from 
\eqref{restr} that $T(\mu)(A\cap B)=T(\mu_A)(B)$ so that $T(\mu_A)=T(\mu)_A$ 
for all $A\in\A$. This conclusion extends by linearity to $\Sim(\A)$. If $\seqn f$ is a 
fundamental sequence for $f\in L^1(\mu)\subset L^1(T(\mu))$, then by continuity
\begin{align*}
T(\mu_f)=\lim_nT(\mu_{f_n})=\lim_nT(\mu)_{f_n}=T(\mu)_f
\end{align*}
and we conclude that $T\in\mathscr L_*(\A)$. \eqref{riesz seq} thus defines a linear 
isometry of $\mathscr L_*(\A)$ onto $ba(\A)^*$. To conclude that this is an isomorphism 
let $T_1,T_2\in\mathscr L_*(\A)$ and let $\phi_1,\phi_2$ be the associated elements of 
$ba(\A)^*$. If $T_1\ne T_2$ then $T_1(\mu)\ne T_2(\mu)$ for some $\mu\in ba(\A)$ 
and thus, by \eqref{restr}, $\phi_1(\mu_A)=T_1(\mu)(A)\ne T_2(\mu)(A)=%
\phi_2(\mu_A)$ for some $A\in\A$.

To prove \eqref{riesz seq}, denote by $\sigma:ba(\A)\to ca(\sigma\tilde\A)$ the Stone
isomorphism. Then, if $T\in\mathscr L_*(\A)$ and $\tilde T=\sigma\cdot T\sigma^{-1}$
one immediately concludes that $\tilde T:ca(\sigma\tilde\A)\to ca(\sigma\tilde\A)$ and
that 
\begin{align*}
\tilde T(\tilde\mu_{\tilde f})
&=
\lim_n\tilde T(\tilde\mu_{\tilde f_n})
=
\lim_n\sigma \left(T(\mu_{f_n})\right)
=
\lim_n\sigma\left( T(\mu)_{f_n}\right)
=
\lim_n\sigma\left( T(\mu)\right)_{\tilde f_n}
=
\lim_n\tilde T(\tilde\mu)_{\tilde f_n}
=
\tilde T(\tilde\mu)_{\tilde f}
\end{align*}
so that $\tilde T\in\mathscr L_*(\sigma\tilde\A)$. Exploiting the existence of Radon Nikodym 
derivatives we conclude that when $\mu\in ba(\A,\lambda)$,
\begin{align*}
\tilde T(\tilde\mu)=\tilde T(\tilde\lambda)_{\tilde f^\mu}
=\int\tilde f^\mu\tilde f^\lambda d\abs{\tilde\lambda}
=\int\tilde f^\lambda d\tilde\mu
\end{align*}
with $\tilde f^\mu\in L^1(\tilde\lambda)$, $\tilde f^\lambda\in L^\infty(\tilde\lambda)$
and $\norm{\tilde f^\lambda}_{L^\infty}\le\norm{T_\lambda}$. Let, as usual,
\begin{equation*}
\tilde f^\lambda_n=\sum_{i=-2^n}^{2^n}i2^{-n}\norm{T_\lambda}%
\sset{i2^{-n}\norm{T_\lambda}\le\tilde f^\lambda<(i+1)2^{-n}\norm{T_\lambda}}
\end{equation*}
The sequence $\seqn{\tilde f^\lambda}$ in $\Sim(\sigma\tilde\A)$ is increasing, converges 
uniformly to $\tilde f^\lambda$ and is positive if $T_\lambda$ is so. Replacing each 
$\sigma\tilde\A$ measurable set in the support of $\tilde f^\lambda_n$ with a corresponding 
$\tilde\A$ measurable set arbitrarily close to it in $\tilde\lambda$ measure, we obtain a 
sequence $\seqn{\hat f^\lambda}$ in $\Sim(\tilde\A)$ such that (\textit{i}) 
$\norm{\hat f^\lambda_n}\le\norm{T_\lambda}$, (\textit{ii}) $\hat f^\lambda_n$ is positive 
if $T_\lambda$ is so and 
(\textit{iii}) $\seqn{\hat f^\lambda}$ converges to $\tilde f^\lambda$ in $L^1(\tilde\mu)$
for each $\mu\in ba(\A,\lambda)$, by \cite[III.3.6]{bible}. Let 
$f_n^\lambda=\sigma^{-1}\left(\hat f^\lambda_n\right)\in\Sim(\A)$. Then,
\begin{align}
\label{conv}
T(\mu)=\sigma^{-1}\left(\tilde T(\tilde\mu)\right)
=
\lim_n\sigma^{-1}\left(\int\hat f^\lambda_nd\tilde\mu\right)
=
\lim_n\int\sigma^{-1}\left(\hat f^\lambda_n\right)d\mu
=
\lim_n\int f^\lambda_nd\mu
\end{align}
so that $\norm{T_\lambda}\le\limsup_n\norm{f^\lambda_n}$. Properties (\textit{i}) and
(\textit{ii}) carry over to the sequence $\seqn{f^\lambda}$, by the properties of the Stone
isomorphism, and therefore $\norm{T_\lambda}=\limsup_n\norm{f^\lambda_n}$. Moreover,
\begin{equation*}
\lim_n\sup_{p,q}\abs\mu\left(\dabs{f^\lambda_{n+p}-f^\lambda_{n+q}}\right)
=
\lim_n\sup_{p,q}\abs{\tilde\mu}\left(\dabs{\hat f^\lambda_{n+p}-\hat f^\lambda_{n+q}}\right)
=
0
\end{equation*}
so that the sequence is Cauchy in $L^1(\mu)$ for all $\mu\in ba(\A,\lambda)$. 
\end{proof}

Implicit in Proposition \ref{pro riesz} is a simple proof of the following, important result.

\begin{corollary}[Berti and Rigo]
\label{cor berti rigo}
The dual space of $L^1(\lambda)$ is isomorphic to $ba_\infty(\A,\lambda)$ 
and the corresponding elements are related via the identity
\begin{equation}
\label{berti rigo}
\varphi(f)=\mu(f)\qquad f\in L^1(\lambda)
\end{equation}
\end{corollary}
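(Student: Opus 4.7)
The plan is to reduce everything to Proposition \ref{pro riesz} via the canonical embedding of $L^1(\lambda)$ into $ba(\A)$. The easy half is that every $\mu\in ba_\infty(\A,\lambda)$ with $\abs\mu\le c\abs\lambda$ yields the continuous functional $f\mapsto\mu(f)$ on $L^1(\lambda)$ of norm at most $c$, since $\abs{\mu(f)}\le\abs\mu(\abs f)\le c\norm f_{L^1(\lambda)}$.

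For the converse, I would first observe that $f\mapsto\lambda_f$ is an isometric embedding of $L^1(\lambda)$ into $ba(\A)$ with image $ba_1(\A,\lambda)$; this rests on the identity $\norm{\lambda_f}=\abs{\lambda_f}(\Omega)=\int\abs f\,d\abs\lambda=\norm f_{L^1(\lambda)}$. Given $\varphi\in L^1(\lambda)^*$, this embedding transports $\varphi$ to a continuous linear functional on the subspace $ba_1(\A,\lambda)\subset ba(\A)$ of the same norm. By Hahn--Banach I would extend this to $\Phi\in ba(\A)^*$ preserving the norm, and then invoke Proposition \ref{pro riesz} to produce $T\in\mathscr L_*(\A)$ with $\Phi(\nu)=T(\nu)(\Omega)$ and $\norm T=\norm\Phi=\norm\varphi$.

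The candidate representing measure is $\mu:=T(\lambda)$. The remark preceding Proposition \ref{pro riesz} gives $\abs\mu\le\norm T\,\abs\lambda=\norm\varphi\,\abs\lambda$, which exhibits $\mu\in ba_\infty(\A,\lambda)$. For any $f\in L^1(\lambda)$, the defining property \eqref{T} of $\mathscr L_*(\A)$ then delivers the desired identity:
\[
\varphi(f)=\Phi(\lambda_f)=T(\lambda_f)(\Omega)=T(\lambda)_f(\Omega)=\mu_f(\Omega)=\mu(f).
\]
Uniqueness of $\mu$ follows by testing on $f=\set A$ for $A\in\A$, and combining the two norm inequalities already obtained yields the claimed isometry.

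There is no substantive obstacle to the argument---Proposition \ref{pro riesz} has done the heavy lifting---and the only point needing care is the verification that $f\mapsto\lambda_f$ is isometric (not merely a contraction), which rests on the standard computation of the total variation of $\lambda_f$. Everything else is routine functional-analytic bookkeeping.
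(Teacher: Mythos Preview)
Your argument is correct and follows essentially the same route as the paper: identify $L^1(\lambda)$ with $ba_1(\A,\lambda)$, pass to $ba(\A)^*$ via Proposition \ref{pro riesz}, and read off $\mu=T(\lambda)\in ba_\infty(\A,\lambda)$. The only difference is that you make explicit the Hahn--Banach extension from $ba_1(\A,\lambda)$ to $ba(\A)$, which the paper leaves implicit in the phrase ``corresponds isometrically to some $T\in\mathscr L_*(\A)$''.
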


\begin{proof}
By the isometric isomorphism between $L^1(\lambda)$ and $ba_1(\A,\lambda)$ and
Proposition \ref{pro riesz}, each continuous linear functional $\varphi$ on $L^1(\lambda)$ 
corresponds isometrically to some $T\in\mathscr L_*(\A)$ via the identity 
$\varphi(f)=T(\lambda)(f)$. Write $\mu=T(\lambda)$. Conversely,
if $\mu\in ba_\infty(\A,\lambda)$ then it is obvious the right hand side of \eqref{berti rigo} 
defines a continuous linear functional on $L(\lambda)$.
\end{proof}

Another interesting conclusion is

\begin{corollary}
\label{cor riesz 2}
For every uniformly bounded net $\neta h$ in $\B(\A)$ there exists a 
uniformly bounded sequence $\seqn f$ in $\Sim(\A)$ which is Cauchy in $L^1(\mu)$ 
for all $\mu\in ba(\A,\lambda)$ and such that
\begin{equation}
\label{net}
\LIM_a\mu(h_a\set A)=\lim_n\mu(f_n\set A)\qquad A\in\A,\ \mu\in ba(\A,\lambda)
\end{equation}
If $\neta h$ is increasing then $\seqn f$ can be chosen to be increasing too.
\end{corollary}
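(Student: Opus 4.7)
The plan is to extract from $\neta h$ a continuous linear operator in $\mathscr L_*(\A)$ via Banach limits, and then read off the sequence $\seqn f$ from the representation furnished by Proposition~\ref{pro riesz}.

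First I would define $\phi:ba(\A)\to\R$ by $\phi(\mu)=\LIM_\alpha\mu(h_\alpha)$. Linearity is inherited from the Banach limit and from integration, and uniform boundedness of $\neta h$, say $\sup_\alpha\norm{h_\alpha}\le M$, gives $\abs{\phi(\mu)}\le M\norm\mu$; hence $\phi\in ba(\A)^*$. Proposition~\ref{pro riesz} then produces a unique $T\in\mathscr L_*(\A)$ such that $\phi(\mu)=T(\mu)(\Omega)$. Combining the defining identity $T(\mu_A)=T(\mu)_A$ with evaluation at $\Omega$ yields
$$T(\mu)(A)=T(\mu_A)(\Omega)=\phi(\mu_A)=\LIM_\alpha\mu(h_\alpha\set A),\qquad A\in\A,\ \mu\in ba(\A).$$

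The second half of Proposition~\ref{pro riesz}, applied to $T_\lambda$, furnishes a sequence $\seqn f$ in $\Sim(\A)$ that is uniformly bounded by $\norm{T_\lambda}\le M$, Cauchy in $L^1(\mu)$ for every $\mu\in ba(\A,\lambda)$, and satisfies $T(\nu)=\lim_n\nu_{f_n}$ for all $\nu\in ba(\A,\lambda)$. Specialising to $\nu=\mu_A$ and evaluating at $\Omega$ gives
$$\lim_n\mu(f_n\set A)=T(\mu_A)(\Omega)=\LIM_\alpha\mu(h_\alpha\set A),$$
which is precisely~(\ref{net}).

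For the monotone case, the shift $h_\alpha\mapsto h_\alpha+M\set\Omega$ replaces $T$ by $T+M\cdot\mathrm{id}$ and each $f_n$ by $f_n+M$, so both monotonicity in $n$ and the Cauchy property are unaffected; we may therefore assume $h_\alpha\ge0$. Then $\mu(h_\alpha\set A)$ is increasing in $\alpha$ whenever $\mu\ge0$, so $T(\mu)(A)=\sup_\alpha\mu(h_\alpha\set A)\ge0$ and $T$ is positive. Proposition~\ref{pro riesz} already yields a positive representing sequence, and the main obstacle is to upgrade positivity to monotonicity in $n$. The dyadic approximants $\tilde f^\lambda_n$ employed in the Stone space proof are by construction increasing, but the step that replaces their $\sigma\tilde\A$-measurable level sets by clopen ones may destroy this. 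I would address the difficulty by carrying out that replacement inductively along a refining sequence of clopen partitions of $\tilde\Omega$, chosen so that the clopen sets used at level $n+1$ split those used at level $n$; transferring back through the Stone isomorphism then delivers an increasing sequence in $\Sim(\A)$ with all the required properties.
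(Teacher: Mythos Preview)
Your main argument is exactly what the paper does: observe that $\mu\mapsto\LIM_\alpha\mu(h_\alpha)$ defines a continuous linear functional on $ba(\A)$, and then invoke Proposition~\ref{pro riesz} to obtain the representing sequence on $ba(\A,\lambda)$. The paper's proof is a single sentence to this effect, and your derivation of $T(\mu)(A)=\LIM_\alpha\mu(h_\alpha\set A)$ via $T(\mu_A)(\Omega)=\phi(\mu_A)$ is the natural unpacking of that sentence.

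Where you go beyond the paper is the monotone clause, which the paper states but does not prove. Your reduction to the positive case by translation is fine (and indeed $T'=T+M\cdot\mathrm{id}$ exactly), and your observation that positivity of $T$ alone only buys positivity of the $f_n$, not monotonicity, is the correct diagnosis: the dyadic truncations $\tilde f^\lambda_n$ in the Stone space are increasing, but the subsequent replacement of $\sigma\tilde\A$-level sets by clopen sets can break this. Your proposed repair---building the clopen approximants inductively so that the partition at stage $n+1$ refines that at stage $n$---is the right idea and works: given a clopen partition $\{B_{n,i}\}$ approximating the level sets at scale $2^{-n}$, one approximates $E_{n+1,2i}$ and $E_{n+1,2i+1}$ within $B_{n,i}$ by a clopen set and its complement in $B_{n,i}$, which is possible since $\tilde\A$ is dense in $\sigma\tilde\A$ for $\abs{\tilde\lambda}$ restricted to any clopen set. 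This preserves both the $L^1(\tilde\lambda)$-approximation and the pointwise inequality $\hat f^\lambda_{n+1}\ge\hat f^\lambda_n$, and transfers back through the Stone isomorphism. So your treatment of the monotone case actually fills a gap the paper leaves open.
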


\begin{proof}
The existence claim follows from Proposition \ref{pro riesz} upon noting that the left 
hand side of \eqref{net} indeed defines a continuous linear functional on $ba(\A)$.
\end{proof}

Corollary \ref{cor riesz 2} suggests that dominated families of measures admit an implicit, 
denumerable structure. This intuition will be made precise in the next section. 

An exact integral representation of the form $\phi(\mu)=\mu(f)$ for elements of $ba(\A)$
will not be possible in general, see \cite[9.2.1]{rao}. On the other hand, the representation 
\eqref{riesz seq} may seem unsatisfactory inasmuch the intervening sequence depends on 
the choice of $\lambda$. This last remark also applies to $ca(\A)$, a space for which, despite 
the characterization of weak compactness, a representation of continuous linear functionals 
is missing. The following result provides an answer.

\begin{theorem}
\label{th riesz}
A linear functional $\phi$ on $ba(\A)$ is continuous if and only if it admits the representation
\begin{equation}
\label{riesz net}
\phi(\mu)=\lim_\alpha\mu(f_\alpha)\qquad\mu\in ba(\A)
\end{equation}
where $\neta f$ is a uniformly bounded net in $\Sim(\A)$ with 
$\limsup_{\alpha\in\mathfrak A}\norm{f_\alpha}=\norm\phi$ which is Cauchy 
in $L^1(\mu)$ for all $\mu\in ba(\A)$.
\end{theorem}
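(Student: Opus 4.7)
The plan is to dispatch the easy direction ($\Leftarrow$) in a single line and to obtain the representation ($\Rightarrow$) by stitching together the sequences supplied by Proposition \ref{pro riesz} --- one per dominating $\lambda$ --- into a single net indexed by pairs $(F, k)$, where $F$ ranges over finite nonempty subsets of $ba(\A)$ and $k\in\N$, directed componentwise.

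For $(\Leftarrow)$, a uniformly bounded net $\neta f$ with bound $M$ satisfies $\dabs{\mu(f_\alpha)}\le M\norm\mu$, so $\dabs{\phi(\mu)}\le M\norm\mu$ and $\phi$ is continuous. For $(\Rightarrow)$, let $T\in\mathscr L_*(\A)$ be the operator furnished by Proposition \ref{pro riesz}, with $\norm T=\norm\phi$. For each finite nonempty $F\subset ba(\A)$ set $\lambda_F=\sum_{\mu\in F}\abs\mu$ so that $\mu\ll\lambda_F$ for every $\mu\in F$; let $\seqn{f^{\lambda_F}}$ be the sequence in $\Sim(\A)$ produced by applying Proposition \ref{pro riesz} to $\lambda_F$; and choose $n(F,k)\in\N$ such that simultaneously (a) $\dabs{\mu(f_{n(F,k)}^{\lambda_F})-\phi(\mu)}<1/k$ for every $\mu\in F$, (b) $\abs{\tilde\mu}\bigl(\dabs{\hat f_{n(F,k)}^{\lambda_F}-\tilde f^{\lambda_F}}\bigr)<1/k$ for every $\mu\in F$ (in the Stone-space notation of the proof of Proposition \ref{pro riesz}), and (c) $\norm{f_{n(F,k)}^{\lambda_F}}\ge\norm{T_{\lambda_F}}-1/k$. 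Such an index exists because (a) and (b) are finite conjunctions of conditions each holding for all sufficiently large $n$ by the $L^1(\tilde\mu)$-convergence $\hat f_n^{\lambda_F}\to\tilde f^{\lambda_F}$ established there, while (c) holds for infinitely many $n$ by the $\limsup$ identity. Define $f_{(F,k)}:=f_{n(F,k)}^{\lambda_F}$.

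The first two required properties follow easily. Uniform boundedness holds since $\norm{f_{(F,k)}}\le\norm{T_{\lambda_F}}\le\norm T=\norm\phi$; for the reverse inequality in $\limsup$, given $\delta>0$ pick $\nu\in ba(\A)$ with $\norm\nu=1$ and $\dabs{\phi(\nu)}>\norm\phi-\delta/2$, so that $\norm{T_{\lambda_F}}\ge\norm{T_{\lambda_\nu}}\ge\dabs{\phi(\nu)}>\norm\phi-\delta/2$ for any $F\ni\nu$; combined with (c) and $1/k\le\delta/2$, this yields $\norm{f_{(F,k)}}>\norm\phi-\delta$ for all $(F,k)\ge(\{\nu\},\lceil 2/\delta\rceil)$. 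Convergence $\phi(\mu)=\lim_{(F,k)}\mu(f_{(F,k)})$ is immediate from (a) upon taking $F_0=\{\mu\}$.

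The main obstacle is the $L^1(\mu)$-Cauchy property, because the sequences $\seqn{f^{\lambda_F}}$ for different $F$ are a priori unrelated. The key observation is that whenever $\mu\in F\cap F'$, both $\tilde f^{\lambda_F}$ and $\tilde f^{\lambda_{F'}}$ satisfy $\tilde T(\tilde\mu)=\int\cdot\,d\tilde\mu$ against the same countably additive measure, so by uniqueness of Radon--Nikodym derivatives in $ca(\sigma\tilde\A,\tilde\mu)$ they coincide $\tilde\mu$-a.e. Combining this with the triangle inequality and the Stone isomorphism then yields, for $(F,k),(F',k')\ge(\{\mu\},k_0)$,
$$
\abs\mu\bigl(\dabs{f_{(F,k)}-f_{(F',k')}}\bigr)
=\abs{\tilde\mu}\bigl(\dabs{\hat f_{n(F,k)}^{\lambda_F}-\hat f_{n(F',k')}^{\lambda_{F'}}}\bigr)
\le\tfrac{1}{k}+0+\tfrac{1}{k'}\le\tfrac{2}{k_0},
$$
which proves the Cauchy condition and completes the proof.
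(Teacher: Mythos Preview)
Your proof is correct and follows essentially the same strategy as the paper: index by finite subsets of $ba(\A)$, pick a dominating $\lambda_F$ for each, approximate the Stone-space density $\tilde f^{\lambda_F}$ by simple functions, and obtain the Cauchy property from the consistency of the densities $\tilde f^{\lambda_F}$ and $\tilde f^{\lambda_{F'}}$ against any $\tilde\mu$ with $\mu\in F\cap F'$. The only cosmetic differences are that you index by pairs $(F,k)$ with error $1/k$ rather than by $\alpha$ alone with error $2^{-|\alpha|}$, and that you phrase the consistency step via Radon--Nikodym uniqueness (applied to $\abs{\tilde\mu}$) rather than the paper's sign-function trick.
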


\begin{proof}
It is easily seen that if the net $\neta f$ is as in the claim, then the right hand 
side of \eqref{riesz net} indeed defines a continuous linear functional on $ba(\A)$
and that $\norm\phi\le\limsup_\alpha\norm{f_\alpha}$. For the converse, 
passing to the Stone space representation and given completeness of 
$ca(\sigma\tilde\A)$, \eqref{riesz seq} becomes
\begin{equation}
\label{rep}
T(\mu)(A)=\tilde\mu(\set{\tilde A}\tilde f^\lambda)\qquad A\in\A,\ \mu\in ba(\A,\lambda)
\end{equation} 
for some $\tilde f^\lambda\in L^\infty(\tilde\lambda)$ with 
$\abs{\tilde f^\lambda}\le\norm{T_\lambda}$. Let $\mathfrak A$ be the collection of all 
finite subsets of $ba(\A)$ directed by inclusion. For each $\alpha\in\mathfrak A$ choose 
$\lambda_\alpha\in ba(\A)$ such that $\lambda_\alpha\gg\alpha$. Of course, for each 
$\mu\in ba(\A)$ there exists $\alpha\in\mathfrak A$ such that $\lambda_\alpha\gg\mu$. 
We then get the representation
\begin{equation}
\label{rep2}
T(\mu)(A)=\tilde\mu(\tilde f^{\lambda_\alpha}\set{\tilde A})
\qquad A\in\A,\ \mu\in\alpha,\ \alpha\in\mathfrak A
\end{equation}
with $\norm{\tilde f^{\lambda_\alpha}}\le\norm{T_{\lambda_\alpha}}$.
Fix $\tilde f_\alpha\in\Sim(\tilde\A)$ such that
$\norm{\tilde f_\alpha}\le\norm{\tilde f^{\lambda_\alpha}}$ and
$$
\sup_{\mu\in\alpha}
\tilde\mu\left(\dabs{\tilde f^{\lambda_\alpha}-\tilde f_\alpha}\right)\le2^{-\abs\alpha-1}
$$
and let $f_\alpha\in\Sim(\A)$ correspond to $\tilde f_\alpha$ under 
the Stone isomorphism. Then, 
\begin{align*}
\lim_\alpha\mu( f_\alpha\set A)&=\lim_{\alpha}\tilde\mu(\tilde f_\alpha\set{\tilde A})
=\lim_{\alpha}\tilde\mu(\tilde f^{\lambda_\alpha}\set{\tilde A})
=T(\mu\set A)
\qquad A\in\A,\ \mu\in ba(\A)
\end{align*}
which, together with \eqref{riesz net}, proves the existence of the representation 
\eqref{riesz seq} and of the inequality 
$\limsup_\alpha\norm{f_\alpha}\le\lim_\alpha\norm{T_{\lambda_\alpha}}%
\le\norm T=\norm\phi$. Moreover, if $\alpha_1,\alpha_2,\alpha\in\mathfrak A$ and 
$\mu\in\alpha\subset\alpha_1,\alpha_2$, then
\begin{align*}
\mu(\dabs{f_{\alpha_1}-f_{\alpha_2}})
&=\mu\left(h(\alpha_1,\alpha_2)(f_{\alpha_1}-f_{\alpha_2})\right)\\
&\le2^{-\abs\alpha}+\tilde\mu\left(\tilde h(\alpha_1,\alpha_2)(\tilde f^{\lambda_{\alpha_1}}
-\tilde f^{\lambda_{\alpha_2}})\right)\\
&=2^{-\abs\alpha}
\end{align*}
the third line following from \eqref{rep2} and the inclusion $h(\alpha_1,\alpha_2)\in\Sim(\A)$. 
But then $\neta f$ is indeed a Cauchy net in $L^1(\mu)$ for all $\mu\in ba(\A)$. 
\end{proof}

The space of uniformly bounded nets in $\Sim(\A)$ is a linear space if, for
$\tilde f=\neta f$ and $\tilde g=\net g\delta{\mathfrak D}$ two such 
nets, we endow $\mathfrak A\times\mathfrak D$ with the product order obtained by letting 
$(\alpha_1,\delta_1)\ge(\alpha_2,\delta_2)$ whenever $\alpha_1\ge\alpha_2$ and 
$\delta_1\ge\delta_2$ and write $\tilde f+\tilde g$ as 
$\nnet{f_\alpha+g_\delta}{(\alpha,\delta)}{\mathfrak A\times\mathfrak D}$. Theorem 
\ref{th riesz} suggests the definition of a seminorm on such space by letting
\begin{equation}
\label{norm}
\norm{F}=\limsup_\alpha\norm{f_\alpha}
\qquad\text{whenever}\qquad 
F=\neta f
\end{equation}
and denote by $\mathfrak C(\A)$ the linear space of equivalence classes of 
uniformly bounded nets in $\Sim(\A)$ which are Cauchy in $L^1(\mu)$ for all 
$\mu\in ba(\A)$.

\begin{theorem}
The identity \eqref{riesz net} defines an isometric isomorphism between $ba(\A)^*$ 
and $\mathfrak C(\A)$.
\end{theorem}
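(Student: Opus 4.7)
The plan is to verify that $\Phi : \mathfrak C(\A) \to ba(\A)^*$, sending the equivalence class of a net $F = \neta f$ to the functional $\Phi([F])(\mu) = \lim_\alpha \mu(f_\alpha)$, is a well-defined linear isometric bijection. The substantive analytic content is already supplied by Theorem \ref{th riesz}; what remains is essentially bookkeeping around the quotient.

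First I would check that $\Phi$ lands in $ba(\A)^*$ and is contractive. The $L^1(\mu)$-Cauchy property of $\neta f$ forces $\mu(f_\alpha)$ to be Cauchy in $\R$ via the bound $\abs{\mu(f_\alpha)-\mu(f_\beta)}\le\abs\mu(\abs{f_\alpha-f_\beta})$, so the limit exists; linearity in $\mu$ is inherited from each $\mu\mapsto\mu(f_\alpha)$. The estimate $\abs{\Phi([F])(\mu)}\le\limsup_\alpha\norm{f_\alpha}\norm\mu=\norm F\norm\mu$ then gives $\Phi([F])\in ba(\A)^*$ with $\norm{\Phi([F])}\le\norm F$. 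Linearity of $\Phi$ follows from the product-order construction introduced before the theorem: $\mu(f_\alpha+g_\delta)=\mu(f_\alpha)+\mu(g_\delta)$ converges along $\mathfrak A\times\mathfrak D$ to $\Phi([F])(\mu)+\Phi([G])(\mu)$, and similarly for scalar multiples.

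Surjectivity together with the matching lower norm bound come directly from Theorem \ref{th riesz}: each $\phi\in ba(\A)^*$ is represented by a uniformly bounded net $\neta f$ in $\Sim(\A)$ which is Cauchy in $L^1(\mu)$ for every $\mu$ and satisfies $\limsup_\alpha\norm{f_\alpha}=\norm\phi$. This net belongs to $\mathfrak C(\A)$, its class maps to $\phi$, and combined with the contractiveness above it witnesses $\norm{\Phi([F])}=\norm F$ on this representative. Injectivity is built into the equivalence defining $\mathfrak C(\A)$: two nets represent the same class precisely when they induce the same functional via \eqref{riesz net}.

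The main obstacle, and the point to handle with care, is that the seminorm \eqref{norm} is not constant across equivalence classes (one can easily exhibit a net with $\norm F>0$ yet $\Phi([F])=0$, for instance $f_n=\sset n$ on $\Omega=\N$), so the norm on $\mathfrak C(\A)$ must be understood as $\norm{[F]}:=\norm{\Phi([F])}$, equivalently the infimum of \eqref{norm} over all representatives of $[F]$. Theorem \ref{th riesz} is precisely the statement that this infimum is attained by the distinguished net it constructs, so with this convention $\Phi$ becomes an isometry and the desired isometric isomorphism is established.
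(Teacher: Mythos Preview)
Your argument is correct and follows the same route as the paper: both rely entirely on Theorem~\ref{th riesz} for the analytic content, and the remainder is quotient bookkeeping. The paper's own proof is in fact a single sentence, checking only that $\norm{F-G}=0$ forces the two nets to induce the same functional; everything else is left implicit.

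Your treatment is more careful than the paper's, and your final paragraph identifies a genuine issue that the paper glosses over. If the equivalence on $\mathfrak C(\A)$ is taken to be the kernel of the seminorm \eqref{norm}, then your example $f_n=\sset{n}$ on $\Omega=\N$ shows that the induced map to $ba(\A)^*$ is not injective, let alone isometric: the class has seminorm $1$ but represents the zero functional. The only reading under which the theorem holds is the one you propose, namely that the equivalence is the kernel of the map \eqref{riesz net} itself and the norm on $\mathfrak C(\A)$ is the infimum of \eqref{norm} over representatives, which Theorem~\ref{th riesz} shows is attained. The paper's one-line proof establishes only the inclusion of the seminorm kernel in this equivalence, so your discussion is a genuine completion rather than mere elaboration.
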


\begin{proof}
The right hand side of \eqref{riesz net} is invariant upon replacing the net $F=\neta f$ 
with $G=\net g\delta{\mathfrak D}$ whenever $\norm{F-G}=0$.
\end{proof}

\section{Some Implications.}
\label{sec implications}

The characterization so obtained is admittedly not an easy one, due to the intrinsic
difficulty of identifying explicitly the net associated to each continuous functional.
It has, this notwithstanding, a number of interesting implications. We illustrate
some with no claim of completeness.

\begin{corollary}
\label{cor sep}
Let $\M$ and $\mathscr N$ be convex, weakly compact subsets of $ba(\A)$. Then, 
\begin{enumerate}[(i)]
\item
$\M\cap\mathscr N=\emp$ if and only if there exists 
$f\in\Sim(\A)$ such that $\inf_{\nu\in\mathscr N}\nu(f)>\sup_{\mu\in\M}\mu(f)$;
\item
there exists $\K\subset\Sim(\A)$ and a subset $\M_0$ of extreme points of $\M$ 
such that
\begin{equation}
\label{support}
\M=\left\{m\in ba(\A):m(k)\le\max_{\mu\in\M_0}\mu(k)\text{ for all }k\in\K\right\}
\end{equation}
\end{enumerate}
\end{corollary}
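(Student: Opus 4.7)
The plan is to reduce part (i) to a Hahn--Banach separation in $ba(\A)$ endowed with its weak topology and then approximate the resulting separating functional by an element of $\Sim(\A)$, combining Proposition \ref{pro riesz} with the uniform Cauchy property of Theorem \ref{th compact 2}. Part (ii) will then follow from (i) together with Bauer's maximum principle applied to the weakly continuous linear functionals $\mu\mapsto\mu(k)$ on $\M$.

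For (i) I would first invoke Hahn--Banach strict separation: since $\M,\mathscr N$ are disjoint convex and weakly compact, there exist $\phi\in ba(\A)^*$ and $\varepsilon>0$ with $\sup_{\mu\in\M}\phi(\mu)+\varepsilon\le\inf_{\nu\in\mathscr N}\phi(\nu)$. Next, since $\M\cup\mathscr N$ is weakly compact, hence norm bounded and relatively weakly compact, Theorem \ref{th compact}\iref{uac} yields some $\lambda\in\mathbf A(\M\cup\mathscr N)$ with $\lambda\uac(\M\cup\mathscr N)$; in particular $\M\cup\mathscr N\subset ba(\A,\lambda)$. Applying Proposition \ref{pro riesz} to $\phi$ with this $\lambda$ produces a uniformly bounded sequence $\seqn f$ in $\Sim(\A)$ that is Cauchy in $L^1(\mu)$ for every $\mu\in ba(\A,\lambda)$ and satisfies $\phi(\mu)=\lim_n\mu(f_n)$ there. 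Invoking the implication \imply{rwc}{un cauchy} of Theorem \ref{th compact 2} on the weakly compact set $\M\cup\mathscr N$ upgrades this pointwise limit to uniform convergence in $\mu\in\M\cup\mathscr N$; choosing $n$ with $\sup_{\mu\in\M\cup\mathscr N}\abs{\mu(f_n)-\phi(\mu)}<\varepsilon/3$ then gives $f=f_n\in\Sim(\A)$ strictly separating $\M$ and $\mathscr N$.

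For (ii) I take $\K=\Sim(\A)$. For each $k\in\K$ the functional $\mu\mapsto\mu(k)$ is weakly continuous and affine on the convex weakly compact set $\M$, so by Bauer's maximum principle its supremum over $\M$ is attained at some extreme point $\mu_k\in\M$; I then let $\M_0=\{\mu_k:k\in\K\}$, a subset of the extreme points of $\M$. By construction, $\max_{\mu\in\M_0}\mu(k)=\max_{\mu\in\M}\mu(k)$ for every $k\in\K$, so the containment of $\M$ in the right hand side of \eqref{support} is immediate. Conversely, if $m\notin\M$ then applying part (i) to $\mathscr N=\{m\}$ furnishes $k\in\Sim(\A)$ with $m(k)>\max_{\mu\in\M}\mu(k)=\max_{\mu\in\M_0}\mu(k)$, so $m$ fails the defining inequality.

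The step I expect to be the principal obstacle is the upgrade from the pointwise limit $\phi(\mu)=\lim_n\mu(f_n)$ to uniform convergence on $\M\cup\mathscr N$: without that uniformity, a $\Sim(\A)$ approximant of the abstract Hahn--Banach separating functional need not inherit strict separation. This is precisely what the uniform Cauchy characterisation of weak compactness in Theorem \ref{th compact 2} is designed to supply, once Theorem \ref{th compact} brings $\M\cup\mathscr N$ inside some $ba(\A,\lambda)$ so that the sequence from Proposition \ref{pro riesz} is Cauchy in $L^1(\mu)$ for every such $\mu$.
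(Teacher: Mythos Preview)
Your argument is correct and follows essentially the same route as the paper: Hahn--Banach separation, then Proposition \ref{pro riesz} to obtain a uniformly bounded $\Sim(\A)$-sequence representing $\phi$ on a common dominating class, and Theorem \ref{th compact 2} to upgrade pointwise to uniform convergence on $\M\cup\mathscr N$; for (ii) the paper likewise combines part (i) (applied to each $m\notin\M$) with the observation that each $\mu\mapsto\mu(k)$ attains its maximum on $\M$ at an extreme point. The only cosmetic difference is that the paper takes $\K$ to be the set of separating simple functions $\{k_m:m\notin\M\}$ rather than all of $\Sim(\A)$, which gives a smaller $\K$ and correspondingly smaller $\M_0$, but the logic is identical.
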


\begin{proof}
(\textit{i}). 
The weak topology is linear. There is then a linear functional $\phi$ on $ba(\A)$ 
and constants $a_1>b_1$ such that 
$\inf_{\nu\in\mathscr N}\phi(\nu)>a>b>\sup_{\mu\in\M}\phi(\mu)$.
By compactness, $\M$ and $\mathscr N$ are dominated so that, by Proposition 
\ref{pro riesz}, $\phi$ is associated with a uniformly bounded, Cauchy sequence 
$\seqn f$ in $\Sim(\A)$, as in \eqref{riesz seq}. We also know from Theorem 
\ref{th compact 2} that for all $\varepsilon$ there exists $n$ sufficiently
large so that $\inf_{\nu\in\mathscr N}\nu(f_n)>a_1-\varepsilon$ and 
$b_1+\varepsilon>\sup_{\mu\in\M}\mu(f_n)$. Choosing 
$\varepsilon<(a-b)/2$ we get 
$\inf_{\nu\in\mathscr N}\nu(f_n)>\frac{a+b}{2}\sup_{\mu\in\M}\mu(f_n)$.

(\textit{ii}). 
For each $m\notin\M$ there is then $k_m\in\Sim(\A)$ such that 
$\sup_{\mu\in\M}\mu(k_m)<m(k_m)$. Let $\K=\{k_m:m\notin\M\}$.
For each $k\in\K$ choose one extreme point $\mu_k\in\M$ in the corresponding 
supporting set of $\M$ and let $\M_0=\{\mu_k:k\in\K\}$. By construction, each
$k\in\K$, when considered as a function on $\M$, attains its maximum on $\M_0$, 
so that the right hand side of \eqref{support} contains $\M$. For each $m\notin\M$ 
there is $k\in\K$ such that $m(k)>\sup_{\mu\in\M}\mu(k)$ so that the right hand 
side is included in $\M$.
\end{proof}

It is well known that, combining the Theorems of Eberlein Smulian and of Mazur, 
and taking convex combinations one may transform a weakly convergent sequence
in a Banach space into a norm convergent one. The following result establishes
a weak form of this fundamental result which holds even in the absence of weak
convergence. The proof exploits some of the ideas introduced by Koml\'{o}s
\cite{komlos}.

\begin{theorem}
\label{th komlos}
Let $\seqn\mu$ be a norm bounded sequence in $ba(\A)_+$ and define
\begin{equation}
\Gamma(n)=\co(\mu_n,\mu_{n+1},\ldots)
\quad\text{and}\quad
\lambda=\sum_n2^{-n}\mu_n
\end{equation}
There exists $\xi\in ba(\lambda)_+$ and a sequence $\seqn m$ with 
$m_n\in\Gamma(n)$ for each $n\in\N$ such that 
\begin{equation}
\label{fatou}
\lim_n\norm{(\xi\wedge k\lambda)-(m_n\wedge k\lambda)}=0
\quad\text{and}\quad
\xi(A)\le\liminf_nm_n(A)
\qquad
A\in\A
\end{equation}
\end{theorem}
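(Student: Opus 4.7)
The plan is to reduce to a countably additive setting via the Stone representation from Section \ref{sec riesz}, apply the classical Koml\'os lemma in $L^1$, and translate the conclusions back.

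First, since $\lambda\ge 2^{-n}\mu_n$ for every $n$, each $\mu_n$ satisfies $\mu_n\ll\lambda$. Passing to the Stone space, $\lambda$ and the $\mu_n$ become countably additive measures $\tilde\lambda,\tilde\mu_n$ on $\sigma\tilde\A$ with $\tilde\mu_n\ll\tilde\lambda$. The Radon-Nikodym theorem on this $\sigma$-additive space supplies densities $\tilde f_n=d\tilde\mu_n/d\tilde\lambda\in L^1(\tilde\lambda)_+$, and $\int\tilde f_n\,d\tilde\lambda=\norm{\mu_n}$ is uniformly bounded by hypothesis.

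Next, Koml\'os' theorem applied to $(\tilde f_n)$ furnishes a subsequence $(\tilde f_{n_k})$ and a function $\tilde f\in L^1(\tilde\lambda)_+$ with $J^{-1}\sum_{k=1}^J\tilde f_{n_k}\to\tilde f$ $\tilde\lambda$-almost everywhere. A diagonal construction -- for each $n$ let $K(n)=\min\{k:n_k\ge n\}$ and set $\tilde g_n=J_n^{-1}\sum_{k=K(n)+1}^{K(n)+J_n}\tilde f_{n_k}$ with $J_n$ so large that $K(n)/J_n\to 0$ -- still produces $\tilde g_n\to\tilde f$ a.e. Let $m_n\in\Gamma(n)$ be the matching convex combination $J_n^{-1}\sum_{k=K(n)+1}^{K(n)+J_n}\mu_{n_k}$, and let $\xi\in ba(\A,\lambda)_+$ correspond under the Stone isomorphism to $\tilde f\cdot\tilde\lambda$.

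Because the Stone isomorphism is an isometric vector-lattice isomorphism preserving Radon-Nikodym densities, $\xi\wedge k\lambda$ corresponds to $(\tilde f\wedge k)\tilde\lambda$, $m_n\wedge k\lambda$ to $(\tilde g_n\wedge k)\tilde\lambda$, and total variation norms coincide with $L^1(\tilde\lambda)$-norms. Dominated convergence with constant envelope $k$ gives
\begin{equation*}
\norm{(\xi\wedge k\lambda)-(m_n\wedge k\lambda)}=\int\abs{\tilde f\wedge k-\tilde g_n\wedge k}\,d\tilde\lambda\to 0.
\end{equation*}
For the Fatou inequality, if $A\in\A$ has clopen counterpart $\tilde A$, the classical Fatou lemma on $(\tilde\Omega,\sigma\tilde\A,\tilde\lambda)$ yields $\xi(A)=\int_{\tilde A}\tilde f\,d\tilde\lambda\le\liminf_n\int_{\tilde A}\tilde g_n\,d\tilde\lambda=\liminf_n m_n(A)$.

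The main obstacle is step 2: arranging the Koml\'os convex combinations to lie in the tail $\Gamma(n)$. Classical Koml\'os delivers only Ces\`aro means $J^{-1}\sum_{k=1}^J\tilde f_{n_k}$ of the subsequence starting from its first term, which need not be convex combinations of $\{\mu_j:j\ge n\}$; the diagonal balancing of the shift $K(n)$ against the averaging length $J_n$ repairs this while preserving a.e. convergence to $\tilde f$.
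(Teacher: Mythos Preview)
Your argument is correct and takes a genuinely different route from the paper.

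The paper works entirely inside the finitely additive framework. It defines the solid tail sets $\mathscr C(n)=\{\nu\ge0:\nu\le m\text{ for some }m\in\Gamma(n)\}$, builds candidate limits $\nu_k=\LIM_n(\mu_n\wedge k\lambda)$ via Banach limits, partially orders the admissible increasing sequences $(\nu_k)_k$, and extracts a maximal one by Zorn's lemma; maximality then forces $\xi_k=\xi\wedge k\lambda=\lim_n(m_n\wedge k\lambda)$ and the Fatou bound. No passage to a countably additive space, and no appeal to the classical Koml\'os theorem, is made.

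Your proof instead transports the problem to the Stone space, where the $\mu_n$ acquire $L^1(\tilde\lambda)$ densities, invokes the classical Koml\'os lemma, and pulls the a.e.\ limit back through the lattice isomorphism; truncation plus dominated convergence gives the first half of \eqref{fatou}, and Fatou's lemma on $(\tilde\Omega,\sigma\tilde\A,\tilde\lambda)$ gives the second. The diagonal step---shifting by $K(n)$ and choosing $J_n$ with $K(n)/J_n\to0$---is sound: on the a.e.\ convergence set the Ces\`aro means $S_M$ are pointwise bounded, so writing $\tilde g_n=\tfrac{K(n)+J_n}{J_n}S_{K(n)+J_n}-\tfrac{K(n)}{J_n}S_{K(n)}$ indeed yields $\tilde g_n\to\tilde f$ a.e.

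What each approach buys: yours is short and conceptually transparent, but imports a nontrivial external result and the Stone machinery. The paper's argument is self-contained in $ba(\A)$ and showcases the Banach-limit and maximality techniques used elsewhere in the paper, at the price of a more intricate construction.
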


\begin{proof}
Define the families
\begin{align*}
\mathscr C(n)
=
\{\nu\in ba(\A)_+:\nu\le m\text{ for some }m\in\Gamma(n)\}
\quad\text{and}\quad
\mathscr C=\bigcap_n\cls{\mathscr C(n)}{ }
\end{align*}
and the set functions
\begin{align}
\label{nu}
\nu_k=\LIM_n(\mu_n\wedge k\lambda)
\qquad k\in\N
\end{align}
We notice that the sequence $\tilde\nu=\seq{\nu}{k}$ so obtained satisfies
the following properties for all $k\in\N$: (\textit{a}) $\nu_{k-1}\le\nu_k\le k\lambda$ 
and (\textit{b}) $\nu_k\in\mathscr C$ as, by Theorem \ref{th exchange}, 
\begin{align*}
\nu_k
\in
\bigcap_n\cco{}(\mu_n\wedge k\lambda,\mu_{n+1}\wedge k\lambda,\ldots)
\end{align*}
The family $\Xi$ of sequences possessing properties (\textit{a}) and (\textit{b})
may be partially ordered upon letting $\tilde\nu\ge\tilde\nu'$ whenever
$\nu_k\ge\nu'_k$ for all $k\in\N$. If $\{\tilde\nu^a:a\in\mathfrak A\}$
is a chain in $\Xi$ we may let $\nu_k=\lim_a\nu^a_k$ and
$\tilde\nu=\seq{\nu}{k}$. It is easily seen that $\tilde\nu$ is increasing
and that $\nu_k\le k\lambda$. Moreover, since $\mathscr C$ is closed 
and norm bounded, $\nu_k^a$ converges to $\nu_k$ in norm, so that 
$\nu_k\in\mathscr C$. Let $\tilde\xi$ be a maximal element in $\Xi$ and define 
the set function $\xi$ implicitly by letting $\xi(A)=\lim_k\xi_k(A)$ for each 
$A\in\A$. Observe that $\xi(\Omega)\le\sup_n\norm{\mu_n}<\infty$ so that
$\lim_k\norm{\xi-\xi_k}=0$ and $\xi\in\mathscr C$. There exist then 
sequences $\seqn m$, with $m_n\in\Gamma(n)$, and $\seqn\delta$ in $ba(\A)_+$ 
such that $\kappa_n=m_n-\delta_n\in\mathscr C(n)$ and that 
$\lim_n\norm{\kappa_n-\xi}=0$. But then
\begin{align}
\label{dom}
\xi_k
\le
\xi\wedge k\lambda
=
\lim_n(\kappa_n\wedge k\lambda)
\le
\LIM_n(m_n\wedge k\lambda)
\equiv
\xi'_k
\qquad
k\in\N
\end{align}
where the intervening limits refer to setwise convergence. Observe that 
$\tilde\xi'=\seq{\xi'}k\in\Xi$ and $\tilde\xi'\ge\tilde\xi$ which is contradictory 
unless $\xi_k=\xi\wedge k\lambda=\LIM_n(m_n\wedge k\lambda)$. This
clearly implies $\xi(A)\le\liminf_nm_n(A)$ for each $A\in\A$. Moreover, 
\eqref{dom} remains true if we replace the sequence $\seqn m$ with any of its
subsequences. This implies that $(m_n\wedge k\lambda)(A)$ converges to $\xi_k(A)$
for each $A\in\A$ so that $\LIM_n(m_n\wedge k\lambda)=\lim_n(m_n\wedge k\lambda)$.
From the inequality 
$
\delta_n
\le
\abs{\kappa_n-\xi}+m_n-\xi_k
$
we deduce
\begin{align*}
\delta_n\wedge k\lambda
\le
\abs{\kappa_n-\xi}+((m_n-\xi_k)\wedge k\lambda)
\le
\abs{\kappa_n-\xi}+(m_n\wedge2k\lambda)-\xi_k
\end{align*}
and therefore
\begin{align*}
\limsup_n\norm{\delta_n\wedge k\lambda}
&\le
\lim_k\limsup_n(\delta_n\wedge k\lambda)(\Omega)\\
&\le
\lim_k\{\limsup_n(m_n\wedge2k\lambda)(\Omega)-\xi_k(\Omega)\}\\
&\le
\lim_k(\xi_{2k}-\xi_k)(\Omega)\\
&=0
\end{align*}
But then we conclude
\begin{align*}
\limsup_n\norm{(m_n\wedge k\lambda)-(\xi\wedge k\lambda)}
=
\limsup_n\norm{(m_n\wedge k\lambda)-(\kappa_n\wedge k\lambda)}
\le
\limsup_n\norm{\delta_n\wedge k\lambda}
=
0
\end{align*}
Property (\textit{ii}) in the claim is a clear consequence of \eqref{dom} and of the
fact that $\xi\ll\lambda$.
\end{proof}
It is clear from the proof that the condition $\liminf_n\norm{\lambda_n}<\infty$ may 
be replaced with the inequality
$\lim_k\lim_n\sup_{\mu\in\Gamma(n,k)}\norm\mu<\infty$,
which is more general but less perspicuous.

One should also remark that if the sequence $\seqn\lambda$ in Theorem 
\ref{th komlos} is weakly convergent, then, by the uniform absolute continuity property, 
$\mu_n\wedge k\lambda$ converges (in norm) to $\mu_n$ uniformly in $n\in\N$ and 
thus the sequence $\seqn\mu$ converges strongly to $\xi$. Theorem \ref{th komlos}
is then indeed a generalization of more classical results. Some implications of Theorem
\ref{th komlos} are developed in \cite{BK}.

\section{The Halmos-Savage Theorem and its Implications}
\label{sec halmos savage}

The  results of the preceding section mainly develop the orthogonality implications of Lemma 
\ref{lemma lebesgue}. We may as well deduce interesting conclusions concerning 
absolute continuity, among which the following finitely additive version of the Lemma 
of Halmos and Savage \cite[Lemma 7, p. 232]{halmos savage}. 

\begin{theorem}[Halmos and Savage]
\label{th halmos savage}
$\M\subset ba(\A,\lambda)$ if and only if $\M\subset ba(\A,m)$ for some $m\in\Av\M$.
\end{theorem}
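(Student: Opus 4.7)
The plan is to extract the desired $m\in\Av\M$ directly from Lemma \ref{lemma lebesgue}. The ``if'' direction is trivial: if $\M\subset ba(\A,m)$ for some $m\in\Av\M$, then $m\in ba(\A)$ itself plays the role of $\lambda$. So the content lies in the converse.

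Assume $\M\subset ba(\A,\lambda)$ and apply Lemma \ref{lemma lebesgue} to the given $\lambda$ relative to $\M$, obtaining the decomposition
\[
\lambda=\lc+\lp,\qquad \lc\ll m\text{ for some }m\in\Av\M,\quad \lp\perp\M.
\]
The candidate is exactly this $m$. Since $\ll$ is transitive in $ba(\A)$ (compose the $\epsilon$-$\delta$ estimates), it suffices to prove $\mu\ll\lc$ for each $\mu\in\M$; then $\mu\ll m$ follows from $\lc\ll m$.

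To prove $\mu\ll\lc$, I will combine the two properties $\mu\ll\lambda$ (hypothesis) and $\lp\perp\mu$ (since $\lp\perp\M$ and $\mu\in\M$). Fix $\mu\in\M$, pick a sequence $\seqn B$ in $\A$ with $\abs{\lc}(B_n)\to0$, and for each $n$ use orthogonality to choose $A_n\in\A$ with $\abs\lp(A_n^c)+\abs\mu(A_n)<1/n$. The key estimate is
\[
\abs\lambda(B_n\cap A_n^c)\le\abs{\lc}(B_n)+\abs\lp(A_n^c)\longrightarrow 0,
\]
so absolute continuity $\mu\ll\lambda$ forces $\abs\mu(B_n\cap A_n^c)\to0$, while $\abs\mu(B_n\cap A_n)\le\abs\mu(A_n)<1/n$ handles the complementary piece. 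Hence $\abs\mu(B_n)\to 0$, which is the required absolute continuity.

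There is really no structural obstacle here; the only thing to be a little careful about is converting the orthogonality condition $\lp\perp\mu$ into a usable separating set $A_n$, which is immediate from the definition of finitely additive orthogonality. The whole proof fits in a few lines once Lemma \ref{lemma lebesgue} is invoked, which is the whole point of placing this Halmos--Savage statement in the same framework as the other consequences of Lebesgue decomposition.
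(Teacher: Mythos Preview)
Your proof is correct and follows exactly the paper's approach: apply Lemma~\ref{lemma lebesgue} and observe that $\lambda\gg\M$ if and only if $\lc\gg\M$, whence the $m\in\Av\M$ dominating $\lc$ dominates $\M$. The paper states this in a single line, while you have spelled out the orthogonality--absolute continuity estimate that justifies $\mu\ll\lc$; the argument is the same.
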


\begin{proof}
$\lambda$ dominates $\M$ if and only if $\lc$ does. The claim follows from 
Lemma \ref{lemma lebesgue}.
\end{proof}

As is well known, Halmos and Savage provided applications of this result to 
the theory of sufficient statistics. Another possible development is the following 
finitely additive version of a well known Theorem of Yan \cite[Theorem 2, p. 220]{yan}:

\begin{corollary}[Yan]
\label{cor yan}
Let $\K\subset L^1(\lambda)$ be convex with $0\in\K$, $\C=\K-\Sim(\A)_+$ 
and denote by $\overline{\C}$ the closure of $\C$ in $L^1(\lambda)$.
The following are equivalent: 
\begin{enumerate}[(i)]
\item\label{eta f} for each $f\in L^1(\lambda)_+$ with $\abs\lambda(f)>0$ 
there exists $\eta>0$ such that $\eta f\notin\overline{\C}$;
\item\label{d A} for each $A\in\A$ with $\abs\lambda(A)>0$ there exists 
$d>0$ such that $d\set A\notin\overline{\C}$;
\item\label{m} there exists $m\in\Pba(\A)$ such that (a) $\K\subset L^1(m)$ 
and $\sup_{k\in\K}m(k)<\infty$, (b) $m\in ba_\infty(\A,\lambda)$ and (c) $m(A)=0$ 
if and only if $\abs\lambda(A)=0$.
\end{enumerate}
\end{corollary}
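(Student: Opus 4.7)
The plan is to prove the cycle \iref{m}$\Rightarrow$\iref{eta f}$\Rightarrow$\iref{d A}$\Rightarrow$\iref{m}, the last implication being the substantive step; \iref{eta f}$\Rightarrow$\iref{d A} is immediate upon choosing $f=\set A$.

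For \iref{m}$\Rightarrow$\iref{eta f}, regard $m$ as a continuous linear functional on $L^1(\lambda)$ via Corollary \ref{cor berti rigo}. Since $m\geq 0$ and $0\in\K$, every $c=k-g\in\C$ satisfies $m(c)\leq m(k)\leq\sup_{k\in\K}m(k)<\infty$, and this bound persists on $\overline{\C}$. Assuming $\eta f\in\overline{\C}$ for every $\eta>0$ with $f\geq 0$, we get $\eta m(f)$ bounded in $\eta$, hence $m(f)=0$. To contradict $\abs\lambda(f)>0$, I would truncate to $f\wedge k_0$ for $k_0$ large enough that $\abs\lambda(f\wedge k_0)>0$ and approximate uniformly by a nonnegative simple $s=\sum_i a_i\set{A_i}\leq f$ with $\abs\lambda(s)>0$; some $a_i\abs\lambda(A_i)>0$, whence property \textit{(c)} gives $m(A_i)>0$ and $m(f)\geq a_im(A_i)>0$.

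The substantive step \iref{d A}$\Rightarrow$\iref{m} proceeds in two stages. First, for each $A\in\A$ with $\abs\lambda(A)>0$ pick $d_A>0$ with $d_A\set A\notin\overline{\C}$ and apply Hahn--Banach in $L^1(\lambda)$, using Corollary \ref{cor berti rigo} to identify the dual with $ba_\infty(\A,\lambda)$: this produces $\varphi_A\in ba_\infty(\A,\lambda)$ with $\varphi_A(\set A)>0$ and $\sup_{c\in\overline{\C}}\varphi_A(c)<\infty$. Since $-\Sim(\A)_+\subset\C$, the upper bound forces $\varphi_A\geq 0$ on $\Sim(\A)_+$, and hence on all of $L^1(\lambda)_+$ by density; since $\K\subset\C$, $\varphi_A$ is also bounded above on $\K$. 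After a positive rescaling, each $\varphi_A$ belongs to
$$
\mathscr N=\{\mu\in ba(\A):0\leq\mu\leq\abs\lambda,\ \sup_{k\in\K}\mu(k)\leq 1\}
$$
and still separates $A$, i.e.\ $\varphi_A(A)>0$.

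Second, I would fuse this pointwise family into a single measure by the finitely additive Halmos--Savage Theorem \ref{th halmos savage}: since $\mathscr N\subset ba(\A,\lambda)$, there is $m\in\Av{\mathscr N}$ with $\mathscr N\subset ba(\A,m)$. Writing $m=\sum_n\alpha_n\nu_n/(1\vee\norm{\nu_n})$ with $\nu_n\in\mathscr N$, both inequalities $\nu_n\leq\abs\lambda$ and $\nu_n(k)\leq 1$ transfer to the convex combination $m$, establishing \textit{(a)} and \textit{(b)} (integrability of $\K$ under $m$ follows from $m\leq\abs\lambda$). For \textit{(c)}: $\abs\lambda(A)=0\Rightarrow m(A)=0$ is immediate from \textit{(b)}; conversely, $m(A)=0$ combined with $\abs\lambda(A)>0$ would force $\varphi_A(A)=0$ via $\varphi_A\ll m$, contradicting the separating property of $\varphi_A$. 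Normalizing $m\to m/m(\Omega)$ (valid since $m(\Omega)>0$ by \textit{(c)}) delivers the required probability. The main obstacle is precisely this fusion: producing a single $m$ equivalent to $\abs\lambda$ in the sense of \textit{(c)}, dominated by $\abs\lambda$, and bounded on $\K$ simultaneously -- this is exactly what Halmos--Savage accomplishes when applied to the rescaled family of separators.
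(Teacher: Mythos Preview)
Your proof is correct and follows essentially the same route as the paper: for \iref{d A}$\Rightarrow$\iref{m} you separate each $A$ with $\abs\lambda(A)>0$ via Hahn--Banach and Corollary \ref{cor berti rigo}, normalize the resulting positive functionals into a $\lambda$-dominated family, and invoke Theorem \ref{th halmos savage} to fuse them into a single equivalent $m$; for \iref{m}$\Rightarrow$\iref{eta f} you likewise reduce to a simple function beneath $f$ with positive $\abs\lambda$-mass. The only imprecision is the word ``uniformly'' in that reduction---a bounded element of $L^1(\lambda)$ need not lie in $\B(\A)$, so the approximation should be taken in $L^1(\lambda)$ (cf.\ \cite[4.5.7--4.5.8]{rao}) rather than uniformly, but this does not affect the argument.
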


\begin{proof}
The implication \imply{eta f}{d A} is obvious. If $A$ and $d$ are as in \iref{d A} 
there exists a continuous linear functional $\phi^A$ on $L^1(\lambda)$ separating 
$\{d\set A\}$ and $\overline{\C}$ and $\phi^A$ admits the representation 
$\phi^A(f)=\mu^A(f)$ for some $\mu^A\in ba_\infty(\A,\lambda)$ such that 
$\mu^A\le c^A\abs\lambda$, Corollary \ref{cor berti rigo}. Thus 
$\sup_{h\in\C}\mu^A(f)\le a<b<d\mu^A(A)$. The inclusion $0\in\C$ implies 
$a\ge0$ so that $\mu^A(A)>0$; moreover, $\mu^A\ge0$ as $-\Sim(\A)_+\subset\C$. 
By normalization we can assume $\norm{\mu^A}\vee c^A\vee a\le1$. The collection 
$\M=\{\mu^A:A\in\A,\ \abs\lambda(A)>0\}$ so obtained is dominated by 
$\lambda$ and therefore by some $m\in\Av\M$, by Theorem \ref{th halmos savage}. 
Thus $m\le\abs\lambda$, $\norm m\le1$ and $\sup_{h\in\C}m(h)\le1$. If $A\in\A$ 
and $\abs\lambda(A)>0$ then $m\gg\mu^A$ implies $m(A)>0$. By normalization 
we can take $m\in\Pba(\A)$. Let $m$ be as in \iref{m} so that 
$L^1(\lambda)\subset L^1(m)$. If $f\in L^1(\lambda)_+$ and $\abs\lambda(f)>0$ 
then $f\wedge n$ converges to $f$ in $L^1(\lambda)$ \cite[III.3.6]{bible} so that 
we can assume that $f$ is bounded. Then, by \cite[4.5.7 and 4.5.8]{rao} there exists 
an increasing sequence $\seqn f$ in $\Sim(\A)$ with $0\le f_n\le f$ such that $f_n$ 
converges to $f$ in $L^1(\lambda)$ and therefore in $L^1(m)$ too. For $n$ large 
enough, then, $\abs\lambda(f_n)>0$ and, $f_n$ being positive and simple, 
$m(f_n)>0$. But then $m(f)=\lim_nm(f_n)>0$ so that $\eta f$ cannot be an element 
of $\overline\C$ for all $\eta>0$ as $\sup_{h\in\overline\C}m(h)<\infty$.
\end{proof}

An application of Corollary \ref{cor yan} is obtained in \cite{BK}.

One may also draw from Theorem \ref{th halmos savage} some implications on the 
structure of a finitely additive set function. 

\begin{theorem}
\label{th hs}
Let $\M\subset ba(\A,\lambda)$ and let $\mathscr H_0\subset\A$ generate the ring
$\mathscr H$. There exist $H_1,H_2,\ldots\in\mathscr H_0$ such that, letting 
$G_n=H_n\backslash\bigcup_{k<n}H_k$ and $G=\bigcap_nH_n^c$, the following holds:
\begin{align}
\label{disintegrate H}
\abs\mu^*(H\cap G)=0\quad\text{and}\quad
\mu(A\cap H)=\sum_n\mu\left(A\cap H\cap G_n\right)
\qquad\mu\in\M,\ A\in\A,\ H\in\mathscr H
\end{align}
Moreover: (i) if $\mu\in\M$ is $\mathscr H_0$-inner regular then 
\begin{equation}
\label{disintegrate A}
\mu(A)=\sum_n\mu\left(A\cap G_n\right)
\qquad A\in\A
\end{equation}
(ii) if $\mathscr H_0$ is closed with respect to countable unions then
\begin{equation}
\label{disintegrate A0}
\mu(A)=\mu(A\cap G)+\sum_n\mu(A\cap G_n)\qquad \mu\in\M,\ A\in\A
\end{equation}
\end{theorem}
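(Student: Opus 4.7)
The plan is to reduce to a single dominating measure via Theorem \ref{th halmos savage} and then to produce $\{H_n\}$ by an exhaustion argument controlled by that measure. Since $\M\subset ba(\A,\lambda)$, Theorem \ref{th halmos savage} provides $m\in\Av\M$ with $\mu\ll m$ for every $\mu\in\M$, so it is enough to find $\{H_n\}\subset\mathscr H_0$ such that $m(H\setminus\bigcup_{n=1}^N H_n)\to 0$ for every $H\in\mathscr H$; the absolute continuity of $\M$ with respect to $m$ will then automatically transfer this asymptotic null property to $\abs\mu$ for every $\mu\in\M$.

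For the exhaustion, set
$$
\alpha=\sup\left\{m\left(\bigcup_{i=1}^N L_i\right):N\in\N,\ L_1,\ldots,L_N\in\mathscr H_0\right\}\le m(\Omega)<\infty,
$$
pick finite unions $K_j$ of elements of $\mathscr H_0$ with $m(K_j)\nearrow\alpha$, and let $H_1,H_2,\ldots$ enumerate all the $\mathscr H_0$-sets appearing in the $K_j$'s. Then $m(\bigcup_{n=1}^N H_n)\nearrow\alpha$. For every $H\in\mathscr H_0$, the set $H\cup\bigcup_{n=1}^N H_n$ is still a finite union of elements of $\mathscr H_0$, hence bounded in $m$-measure by $\alpha$, so finite additivity gives
$$
m\left(H\setminus\bigcup_{n=1}^N H_n\right)\le\alpha-m\left(\bigcup_{n=1}^N H_n\right)\longrightarrow 0.
$$
Since every element of the ring $\mathscr H$ is contained in a finite union of sets from $\mathscr H_0$, monotonicity extends this to arbitrary $H\in\mathscr H$.

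From here the conclusions follow with little further work. Absolute continuity gives $\abs\mu(H\setminus\bigcup_{n=1}^N H_n)\to 0$ for $\mu\in\M$ and $H\in\mathscr H$, yielding the outer-measure identity $\abs\mu^*(H\cap G)=0$ and, via finite additivity, the telescoping identity $\sum_{n=1}^N\mu(A\cap H\cap G_n)=\mu(A\cap H\cap\bigcup_{n=1}^N H_n)\to\mu(A\cap H)$, which is \eqref{disintegrate H}. For \eqref{disintegrate A}, $\mathscr H_0$-inner regularity of $\mu$ lets us approximate $A\in\A$ from inside by some $L\in\mathscr H_0$ within $\varepsilon$ in $\abs\mu$-measure; combined with $\abs\mu(L\setminus\bigcup_{n=1}^N H_n)\to 0$ this yields $\abs\mu(A\setminus\bigcup_{n=1}^N H_n)\to 0$. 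For \eqref{disintegrate A0}, closure of $\mathscr H_0$ under countable unions makes $\bigcup_n H_n\in\mathscr H_0\subset\A$, so $G\in\A$ and finite additivity gives $\mu(A)=\mu(A\cap G)+\mu(A\cap\bigcup_n H_n)$; applying \eqref{disintegrate H} with $H=\bigcup_n H_n\in\mathscr H$ then reduces the second summand to $\sum_n\mu(A\cap G_n)$.

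The only real obstacle is the finitely additive setting: one cannot invoke countable additivity to pass $m$ through $\bigcup_n H_n$ or to claim $m(\bigcup_n H_n)=\alpha$ directly. The whole argument must therefore be anchored on the finite partial unions $\bigcup_{n=1}^N H_n$, with the scalar $\alpha$ serving as the sole bridge between the finite and the countable.
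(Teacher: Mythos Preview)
Your argument is correct, but it diverges from the paper's in where Theorem~\ref{th halmos savage} actually does work. You invoke it on $\M$ to obtain a dominating $m\in\Av\M$ and then run an independent exhaustion (maximising $m$ over finite unions from $\mathscr H_0$) to manufacture the sequence $\{H_n\}$. The first step is in fact superfluous: since $\M\subset ba(\A,\lambda)$, the measure $\abs\lambda$ already dominates $\M$ and could replace your $m$ throughout the exhaustion. The paper instead applies Halmos--Savage to the auxiliary family $\{\lambda_H:H\in\mathscr H_0\}\subset ba(\A,\lambda)$; the sequence $H_1,H_2,\ldots$ then emerges directly from the very form of a dominating element $m_0=\sum_n\alpha_n\lambda_{H_n}\in\Av{\{\lambda_H\}}$. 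From $\lambda_H\ll m_0\le m=\sum_n\lambda_{G_n}$ and the built-in fact that $m\big(\bigcap_{n<k}H_n^c\big)\to 0$, one obtains $\lambda\big(H\cap\bigcap_{n<k}H_n^c\big)\to 0$ with no separate supremum argument, and absolute continuity of $\M$ with respect to $\lambda$ finishes as in your proof. In short, the paper uses Halmos--Savage as the mechanism that \emph{produces} the countable exhausting family, whereas you construct that family by hand and use domination only to transfer conclusions from a single measure to all of $\M$; your route is more elementary and self-contained, but it somewhat obscures why Halmos--Savage is the natural engine for this theorem.
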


\begin{proof}
With no loss of generality, let $\lambda\ge0$ and write $\M=\{\lambda_H:H\in\mathscr H_0\}$. 
By Theorem \ref{th halmos savage}, choose $m_0=\sum_n\alpha_n\lambda_{H_n}\in\Av\M$ 
to be such that $m_0\gg\M$. Let $G$ and $G_n$ be as in the statement and define
$m=\sum_n\lambda_{G_n}$. Observe that 
$m\ge m_0$ and that, by construction, $\lim_km(\bigcap_{n<k}H_n^c)=0$. But then, 
for each $H\in\mathscr H_0$ we conclude $\lim_k\lambda_H(\bigcap_{n<k}H_n^c)%
=\lim_k\lambda(H\cap\bigcap_{n<k}H_n^c)=0$ and, by absolute continuity,
$\abs\mu^*(H\cap G)\le\lim_k\abs\mu(H\cap\bigcap_{n<k}H_n^c)=0$ for all 
$\mu\in\M$. Consequently, if $A\in\A$ and $H\in\mathscr H_0$
\begin{align*}
\mu(A\cap H)
&=\mu\left(A\cap H\cap\left(\bigcup_{n< k}G_n\cup\bigcap_{n< k}G^c_n\right)\right)\\
&=\lim_k\mu\left(A\cap H\cap\bigcup_{n< k}G_n\right)\\
&=\sum_n\mu(A\cap H\cap G_n)
\end{align*}
The set function $\sum_n\mu_{G_n}$ agrees with $\mu$ on the ring $\mathscr R$ 
consisting of all finite, disjoint unions of sets of the form $A\cap H$ with $A\in\A$ and 
$H\in\mathscr H_0$. Another ring is the collection $\mathscr J=%
\{H\in\mathscr H:A\cap H\in\mathscr R\ \text{ for all }A\in\A\}$ which therefore coincides 
with $\mathscr H$. Thus, $\{H\cap A:H\in\mathscr H,\ A\in\A\}\subset\mathscr R$
which proves \eqref{disintegrate H}.

If $\mu\in\M$ is $\mathscr H_0$-inner regular, then,
\begin{align*}
\mu^+(A)=\sup_{\{H\in\mathscr H_0:H\subset A\}}\mu(H)
=\sup_{\{H\in\mathscr H_0:H\subset A\}}\ \sum_n\mu(H\cap G_n)
\le\sum_n\mu^+(A\cap G_n)
\le\mu^+(A)
\end{align*}
the last inequality following from additivity. Exchanging $\mu$ with $-\mu$ proves 
\eqref{disintegrate A}. Eventually, if $\mathscr H_0$ is closed with respect to countable 
unions, then $\bigcup_{n>k}G_n\in\mathscr H$ and, by \eqref{disintegrate H}, 
$\mu\left(\bigcup_{n>k}G_n\right)=\sum_{n>k}\mu(G_n)$ from which 
\eqref{disintegrate A0} readily follows. 
\end{proof}

The following Corollary \ref{cor borel} illustrates a special case.

\begin{corollary}
\label{cor borel}
Let $\Omega$ be a separable metric space, $\A$ its Borel $\sigma$-algebra and 
$\M\subset ca(\A,\lambda)$. If $\pi$ is a partition of $\Omega$ into open sets then
there exist $H_1,H_2,\ldots\in\pi$ such that
\begin{equation}
\label{disintegrate borel}
\mu(A)=\sum_n\mu(A\cap H_n)\qquad A\in\A,\ \mu\in\M
\end{equation}
\end{corollary}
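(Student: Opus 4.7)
The plan is to apply Theorem \ref{th hs} with $\mathscr H_0 = \pi$ and to exploit the separability of $\Omega$ to dispose of the residual set $G$. Because the elements of $\pi$ are pairwise disjoint, the sets produced by the theorem satisfy $G_n = H_n$, so that $G = \Omega \setminus \bigcup_n H_n$ coincides with the union of those elements of $\pi$ not selected in the enumeration $H_1,H_2,\ldots$

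From \eqref{disintegrate H}, $|\mu|^*(H \cap G) = 0$ for every $\mu \in \M$ and every $H$ in the ring $\mathscr H$ generated by $\pi$; since $\pi \subset \mathscr H$, this applies in particular to every element of $\pi$. If $H \in \pi$ is not in the enumeration, disjointness from each $H_n$ forces $H \subset G$, hence $H \cap G = H$, and therefore $|\mu|(H) = 0$ for every $\mu \in \M$.

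Here separability enters the picture: in a separable metric space any pairwise disjoint family of nonempty open sets is at most countable, since each such set must meet a fixed countable dense subset. Consequently $\pi$ is countable, and so is the residual family $\pi \setminus \{H_n\}_n$. Writing $G$ as the countable union of these residual open sets and using countable subadditivity of $|\mu|$, we conclude $|\mu|(G) = 0$ for every $\mu \in \M$, so that $\mu(A \cap G) = 0$ for every $A \in \A$. Countable additivity of $\mu$ together with the decomposition $\Omega = \bigsqcup_n H_n \sqcup G$ then gives the desired identity $\mu(A) = \sum_n \mu(A \cap H_n)$.

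The main conceptual step is bridging the gap between the ring-level conclusion of Theorem \ref{th hs} and the $\sigma$-algebra-level vanishing of $\mu$ on $G$: Theorem \ref{th hs} only guarantees that finite unions of elements of $\pi$ behave well, and it is precisely separability that upgrades this to a countable union statement and so allows the residual set to be discarded.
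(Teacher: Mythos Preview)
Your argument is correct and follows the same overall plan as the paper's proof: apply Theorem~\ref{th hs} with $\mathscr H_0=\pi$, use that $\pi$ is a partition to get $G_n=H_n$, and then show that the residual set $G$ is null for every $\mu\in\M$. The difference lies in how the last step is carried out. The paper invokes $\tau$-additivity of $\lambda$ on a separable metric space (Bogachev, Proposition~7.2.2): writing $G=\bigcup_{H\in\pi,\,H\subset G}H$ as an (a priori arbitrary) union of open sets, $\tau$-additivity gives $\lambda(G)=0$, and then $\mu\ll\lambda$ transfers this to each $\mu\in\M$. You instead observe directly that a pairwise disjoint family of nonempty open sets in a separable metric space is countable, so $G$ is a \emph{countable} union of sets each of which has $\abs\mu$-measure zero by \eqref{disintegrate H}; countable additivity of $\mu\in ca(\A)$ then finishes the job. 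Your route is more elementary in that it avoids the appeal to $\tau$-additivity and works directly with the measures in $\M$ rather than passing through $\lambda$; the paper's route, on the other hand, does not need to argue that $\pi$ itself is countable. Both are perfectly valid.
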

\begin{proof}
Under the current assumptions, for each increasing net $\neta O$ of open sets we 
have $\lambda(\bigcup_\alpha O_\alpha)=\lim_\alpha\lambda(O_\alpha)$, 
\cite[Proposition 7.2.2]{bogachev}. Let $\mathscr H_0=\pi$ extract $H_1,H_2,\ldots\in\pi$ 
as in Theorem \ref{th hs} and observe that, $\pi$ being a partition, $G_n=H_n$ for 
$n=1,2,\ldots$; moreover $G=\bigcup_{H\in\pi,H\subset G}H$ and so $\lambda(G)=0$. 
We conclude that \eqref{disintegrate borel} holds. 
\end{proof}

To motivate further our interest in the preceding conclusions, assume that $\pi$
is an $\A$ partition and that $\lambda$ is $\pi$-inner regular. Then for each $H\in\pi$ 
and $A\in\A$ one may define $\cond\sigma A H=\lambda(A\cap H_n)/\lambda(H_n)$ 
if $H=H_n$ and $\lambda(H_n)\ne0$ or $\cond\sigma A H=m_H(A)$ for any 
$m_H\in\Pba(\A)$ with $m_H(H)=1$. Write 
$\cond\sigma A \pi=\sum_{H\in\pi}\cond\sigma A H \set H$. Then,
\begin{equation}
\lambda(A)=\int\cond\sigma A\pi d\lambda\qquad A\in\A
\end{equation}
This follows from $\int\cond\sigma A\pi d\lambda=%
\sum_n\cond\sigma A {G_n}\lambda(G_n)+\int_{G}\cond\sigma A\pi d\lambda
=\sum_n\lambda(A\cap G_n)
=\lambda(A)
$.
In the terminology introduced by Dubins \cite{dubins}, $\lambda$ is then strategic
along any partition relatively to which it is inner regular.

\end{document}